\documentclass[11pt]{amsart}
\usepackage[T1]{fontenc}
\usepackage{lmodern}
\usepackage{amsmath,amssymb,amsthm,mathtools}
\usepackage[margin=1in]{geometry}
\usepackage{microtype}
\usepackage{xcolor}
\usepackage[colorlinks=true,allcolors=blue]{hyperref}

\numberwithin{equation}{section}
\newtheorem{theorem}{Theorem}[section]
\newtheorem{proposition}[theorem]{Proposition}
\newtheorem{lemma}[theorem]{Lemma}

\newtheorem*{theoremA}{Theorem A}
\newtheorem*{theoremB}{Theorem B}
\newtheorem*{corollaryC}{Corollary C}
\theoremstyle{definition}
\newtheorem{definition}[theorem]{Definition}
\theoremstyle{remark}
\newtheorem{remark}[theorem]{Remark}

\title[Positive sectional curvature on exotic \(8\)- and \(10\)-spheres]{Positive sectional curvature on the exotic \(8\)-sphere and order-three homotopy \(10\)-spheres}
\author{Fernando Galaz-Garc\'ia}
\address{Department of Mathematical Sciences, Durham University, Durham, United Kingdom}
\email{fernando.galaz-garcia@durham.ac.uk}
\date{\today}
\begin{document}
\begin{abstract}
We prove that the exotic smooth \(8\)-sphere and both oriented homotopy \(10\)-spheres representing elements of order three admit Riemannian metrics with strictly positive sectional curvature. The construction combines Sperança's special \(S^3\)-\(S^3\) bundle models with the compatible-disk construction of He, Liu and Yau. We show that the relevant bundles admit equivariant polar normal forms, with transition functions that are constant along meridians and conjugation-equivariant. We also show that the representation-dependent part of the He--Liu--Yau construction requires only a uniform bound on the infinitesimal action fields. Sperança's \(8\)- and \(10\)-dimensional examples satisfy this bound. The resulting northern and southern metrics have matching boundary metrics and compatible second fundamental forms, so the Reiser--Wraith gluing theorem gives the required positively curved metrics.
\end{abstract}
\maketitle

\section{Introduction and results}
The construction of Riemannian metrics with positive sectional curvature is a longstanding problem in global Riemannian geometry. Beyond the compact rank-one symmetric spaces, the known examples have historically been sparse and have arisen from a small number of constructions involving homogeneous spaces, biquotients, fibrations, and manifolds with large symmetry; see the surveys of Grove \cite{GroSurvey} and Ziller \cite{ZiSurvey}. Exotic spheres have played a particularly important role in this problem. Gromoll and Meyer constructed a metric of nonnegative sectional curvature on an exotic \(7\)-sphere \cite{GM}, Grove and Ziller obtained nonnegative curvature on the Milnor spheres \cite{GZ}, and Goette, Kerin and Shankar later showed that every exotic \(7\)-sphere admits nonnegative sectional curvature \cite{GKS}. Wilhelm constructed a metric with positive sectional curvature almost everywhere on the Gromoll--Meyer sphere \cite{Wi}, and Eschenburg and Kerin subsequently obtained an almost positively curved metric on the same sphere \cite{EK}. Petersen and Wilhelm proposed a metric with strictly positive sectional curvature on the Gromoll--Meyer sphere \cite{PW}. More recently, Ouyang \cite{OuGM} and Guo, Fang and Lu \cite{GFLGM} have proposed two further constructions of positively curved metrics on the Gromoll--Meyer sphere. For Ricci curvature, Wraith proved that every homotopy sphere bounding a parallelisable manifold admits a metric of positive Ricci curvature \cite{WrRicci}; see also the survey of Joachim and Wraith \cite{JW} for the broader curvature picture on exotic spheres.

Recently, there has also been renewed activity in the construction of metrics with positive sectional curvature on other manifolds. Among several new preprints, Brendle and Hung propose such a metric on \(S^2\times S^2\) \cite{BH}, Guo, Fang and Lu propose such metrics on \(S^2\times S^3\) and \(S^3\times S^3\) \cite{GFL23,GFL33}, Deng, Hu and Zhang propose positively curved metrics on a family of homotopy \(11\)-spheres \cite{DHZ}, and He, Liu and Yau propose positively curved metrics on all smooth homotopy \(7\)-spheres \cite{HLY}. The latter construction realises each smooth homotopy \(7\)-sphere as the gluing of two compatible positively curved quotient disks. On the southern side they use a connection metric, while on the northern side they use a doubly warped metric. The metric profiles are chosen so that the induced boundary metrics agree and the sum of the outward second fundamental forms is positive. The two disks can then be smoothly glued while preserving positive sectional curvature by a gluing theorem of Reiser and Wraith \cite{RW}. We show that the same construction applies to the exotic homotopy spheres in dimensions eight and ten arising from Sperança's special \(S^3\)-\(S^3\) bundles \cite{Sp}. These are the first even-dimensional exotic homotopy spheres known to admit even non-negative sectional curvature.

Let \(\Theta_m\) denote the group of oriented diffeomorphism classes of smooth homotopy \(m\)-spheres. Kervaire--Milnor \cite[Theorem~1.1 and p.~504]{KM} showed that
\[
\Theta_8\cong\mathbb Z/2,
\qquad
\Theta_{10}\cong\mathbb Z/6.
\]
The nontrivial element of \(\Theta_8\) is represented by the unique exotic \(8\)-sphere up to oriented diffeomorphism.

\begin{theoremA}
The exotic smooth \(8\)-sphere admits a smooth Riemannian metric with strictly positive sectional curvature.
\end{theoremA}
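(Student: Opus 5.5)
Following the strategy announced in the introduction, I realise the exotic \(8\)-sphere as a gluing of two positively curved disks. Then the Reiser--Wraith gluing theorem \cite{RW} does the rest, provided the boundary metrics agree and the sum of the second fundamental forms is positive.

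\emph{Topological model.} By Sperança \cite{Sp}, the exotic \(8\)-sphere \(\Sigma^8\) is the total space, or a quotient, of a special \(S^3\)-\(S^3\) bundle construction. Concretely, \(\Sigma^8\) is the base of a principal \(S^3\)-bundle \(P\to\Sigma^8\), where \(P\) is obtained by clutching \(D^8\times S^3\) with a transition map built from a homomorphism-like map \(S^7\to S^3\) twisted by an \(S^3\)-action on \(S^7\). I first put this into an \emph{equivariant polar normal form}. I decompose \(\Sigma^8=D_N\cup D_S\) along an equatorial \(S^7\), and write the clutching data in polar coordinates \((t,\theta)\in[0,1]\times S^7\) on each half. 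I then arrange, by an equivariant homotopy, that the transition function is constant along meridians (independent of \(t\)) and equivariant under conjugation by \(S^3\). The second property means the clutching commutes with the diagonal \(S^3\)-action used to take quotients, so each half is a quotient disk \((D^8\times S^3)/S^3\) or \((D^4\times S^7)/S^3\)-type space. This matches the He--Liu--Yau setup \cite{HLY} in dimension \(7\), with the roles of the Hopf fibration replaced by Sperança's representation.

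\emph{Metrics on the halves.} On the southern disk I take a connection metric: a Cheeger-deformed product of a round-type metric and a bi-invariant metric on \(S^3\), pushed down by O'Neill's formula, which keeps curvature positive on the quotient. On the northern disk I take the doubly warped metric \(dt^2+f(t)^2g_{S^3}+h(t)^2 g_{\mathrm{fib}}\) of He--Liu--Yau, modified by the twisting. The only representation-dependent terms in the He--Liu--Yau curvature computation come from the infinitesimal action fields \(X\mapsto X^*\) of the \(S^3\)-action on the fiber. I would verify that their estimates use only a uniform bound \(|X^*|\le C|X|\), with \(C\) matching the Hopf case. Then I check that Sperança's \(8\)-dimensional representation satisfies this bound, since it acts by isometries of the round sphere with comparable orbit lengths. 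Given this, the He--Liu--Yau profile choices for \(f,h\) and the connection-metric parameters transfer verbatim.

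\emph{Gluing and the main obstacle.} With both halves positively curved, I match the induced metrics on \(\partial D_N\cong\partial D_S\). This is arranged because meridian-constant transitions make both boundary metrics the same submersion metric on \(S^7\). I then check that \(\mathrm{II}_N+\mathrm{II}_S>0\) via the profile derivatives at the boundary. Reiser--Wraith then yields a smooth positively curved metric on \(\Sigma^8\).

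I expect the hardest step to be the equivariant polar normal form. One must show that Sperança's clutching map is homotopic, through conjugation-equivariant maps, to one constant along meridians, without changing the diffeomorphism type. I would try first to exploit the fact that Sperança's transition is already given by an explicit quaternionic formula, and to straighten it radially by a cutoff. A secondary risk is that the uniform action-field bound constant is worse than in the \(7\)-dimensional case. If so, I would rescale the fiber metric, i.e.\ shrink \(h\), to absorb the constant.
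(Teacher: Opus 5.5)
Your outline has the same architecture as the paper: Sperança's star bundle \(E^{11}\to S^8\), an equivariant polar normal form, the He--Liu--Yau southern connection metric and northern doubly warped metric, an action-field bound, and Reiser--Wraith gluing. However, the two steps that carry the content are only announced, and the mechanism you suggest for the first presupposes what has to be proved.

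A conjugation-equivariant transition \(u_S=\theta(x)u_N\) only makes sense once you have trivialisations over the whole northern and southern hemispheres in which the star action is exactly \(q\star(\zeta,u)=(\rho_8(q)\zeta,qu)\). These hemispheres are in the standard base \(S^8\), not in \(\Sigma^8\). The exotic sphere only appears afterwards, as \(D_N\cup_\sigma D_S\), with each half \((D^8\times S^3)/S^3_\star\cong D^8\) and \(\sigma(y)=\rho_8(\theta(y))^{-1}y\).

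Your plan to straighten Sperança's clutching by an equivariant homotopy or cutoff presupposes such charts. Once they exist, a radial regauging is indeed easy, so producing them is the missing idea. The paper gets them as follows. Over a fixed pole, \(q\star p_0=p_0\varphi(q)\) with \(\varphi\) a homomorphism. It is injective by freeness, hence an automorphism of \(S^3\), hence inner, \(\varphi(q)=cqc^{-1}\). So \(s_0=p_0c\) satisfies \(q\star s_0=s_0q\). Parallel transport of \(s_0\) along meridians, using a Haar-averaged star-invariant connection, gives \(s_N(\rho(q)\zeta)=(q\star s_N(\zeta))q^{-1}\) by uniqueness of horizontal lifts; likewise from \(o_S\). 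The same uniqueness shows that \(s_N\) and \(s_S\) differ by a constant along each meridian. So meridian-constancy comes for free and no homotopy is needed.

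The action-field step is also only asserted, and partly misplaced. The representation enters through the base action \(\rho_8(q)(\lambda,x,w)=(\lambda,qx,qwq^{-1})\) on the equatorial sphere \(S(\mathbb H\oplus\mathbb H)\). It does not enter through the fibre, where the star action is always left multiplication. Star-horizontal vectors are graphs \(U=-\tfrac{F}{r}K_y^*X\), and the He--Liu--Yau estimates as run in the paper use \(\|K_y\|_{\mathrm{op}}\le2\).

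``Comparable orbit lengths'' does not give this bound. You have to compute \(K_y\xi=(\xi x,[\xi,w])\), so \(|K_y\xi|^2=|\xi|^2|x|^2+4|\xi\times\operatorname{Im}w|^2\le4|\xi|^2\). The bound is attained at \(x=0\), \(w\) imaginary, \(\xi\perp w\), so there is no slack for a cruder argument.

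Your fallback of shrinking the fibre scale goes the wrong way. The operator \(T=-\tfrac{F}{r}K_y^*\) has norm growing like \(F/r\) as \(r\) shrinks. The error ratio \(d/r^3\approx q_s/r_a^2=\tfrac12A_0F_ae^{-\varepsilon A_0|\cos a|}\) does not improve as \(\varepsilon\to0\). A larger constant would instead have to be absorbed by shrinking the northern profile scale \(\delta\).

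Finally, for \(n=8\) the angular factor of the northern doubly warped metric is \(S^7\), not \(S^3\): the metric is \(ds^2+F^2h_{S^7}+r^2h_{S^3}\).
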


\begin{theoremB}
Both oriented homotopy \(10\)-spheres representing elements of order three in \(\Theta_{10}\) admit smooth Riemannian metrics with strictly positive sectional curvature.
\end{theoremB}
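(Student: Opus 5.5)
The plan follows the strategy outlined in the introduction and treats Theorems A and B together. The first step is topological. I start from Sperança's models \cite{Sp}. The exotic \(8\)-sphere and the two order-three homotopy \(10\)-spheres are realised as quotients \(\Sigma = P/S^3\) of special \(S^3\)-\(S^3\) bundles \(P\), which are built from a representation \(\rho\) of \(S^3\) and a clutching map. I would cut the base sphere along its equator. This writes \(\Sigma\) as the union of two quotient disk bundles: a northern half \(D_N\) and a southern half \(D_S\), glued along a common boundary.

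The second step is an equivariant polar normal form. In polar coordinates \((r,\theta)\) on each hemisphere, I would show that the bundle can be trivialised so that three things hold:
\begin{itemize}
\item the transition function depends only on the meridian direction \(\theta\) and not on \(r\);
\item it is equivariant under the conjugation action of \(S^3\);
\item the free \(S^3\)-action that defines the quotient appears in a standard form on both halves.
\end{itemize}
This lets the He--Liu--Yau ansatz \cite{HLY} be written down literally. On \(D_S\) I take a connection metric, obtained by submersion from a product of a round disk with a bi-invariant \(S^3\). On \(D_N\) I take a doubly warped metric with profile functions \(f(r)\) and \(h(r)\). Both halves are quotients by the free \(S^3\)-action.

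The third step is the curvature verification, and I expect it to be the main obstacle. In \cite{HLY} the positivity estimates are carried out for the specific seven-dimensional representations. Here I would isolate exactly where the representation enters: only through the action fields \(X^*\), for \(X\) in \(\mathfrak{su}(2)\), acting on the fibre. The goal is an abstract version of the He--Liu--Yau estimates assuming only a uniform bound \(|X^*| \le C|X|\). Concretely, I would redo their O'Neill \(A\)-tensor and Gray--O'Neill computations with every representation-dependent term bounded in terms of \(C\). I would then choose the warping parameters (the collapse scale near the pole and the slope of \(f\) and \(h\)) small or large relative to \(C\). These choices should make the horizontal curvature terms dominate, giving \(\sec>0\) on each disk. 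Finally, I would check directly that the representations in Sperança's \(8\)- and \(10\)-dimensional examples satisfy the bound, with explicit constants coming from the weights of \(\rho\).

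The last step is gluing. By the normal form, both boundary metrics are the same homogeneous-type metric on the common boundary, provided the profiles match at the equator: equal warping values at \(r=r_0\), and boundary connection data equal by construction. I would then compute both second fundamental forms, using \(f'(r_0)\) and \(h'(r_0)\) on the north and the totally geodesic-type boundary of the connection side on the south. The aim is to choose the profiles so that the sum of the outward second fundamental forms is positive definite. The Reiser--Wraith gluing theorem \cite{RW} then yields a smooth metric with \(\sec>0\) on \(\Sigma\). This proves Theorem A, and applying the same argument to both orientations of the order-three class gives Theorem B.
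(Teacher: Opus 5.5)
Your plan follows the paper's proof: Sperança's star bundle $E^{13}\to S^{10}$ in polar normal form, the He--Liu--Yau connection-metric southern filling and doubly warped northern filling with the representation entering only through a bound on $K_y$, Reiser--Wraith gluing, and orientation reversal for the second order-three element; the paper notes that $\rho_{10}$ satisfies exactly the He--Liu--Yau bound $\|K_y\|_{\mathrm{op}}\le 2$, so their profiles are reused verbatim rather than re-tuned to a general constant. Two details of your paraphrase need correcting: the southern cap must be strictly smaller than a hemisphere ($a>\pi/2$), since on a closed hemisphere no function satisfies $-\operatorname{Hess}\varphi\ge\Lambda g_B$ (restrict it to a great circle in the boundary); and the southern boundary is not totally geodesic but has source second fundamental form $-\cot a\,|X|^2-q_s|U|^2$, concave in the fibre directions, which is why the northern profile is built with $r'(\ell_N)/r_a=q_s$ to cancel that term.
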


\begin{corollaryC}
Every smooth homotopy \(8\)-sphere admits positive sectional curvature. A smooth homotopy \(10\)-sphere admits positive sectional curvature if and only if it admits positive scalar curvature.
\end{corollaryC}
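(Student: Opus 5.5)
The statement to prove is Corollary C, taking Theorems A and B as given. The plan is to combine them with the classification of homotopy spheres and the known obstructions to positive scalar curvature.

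\emph{Dimension eight.} By Kervaire--Milnor, \(\Theta_8\cong\mathbb Z/2\). So up to oriented diffeomorphism a smooth homotopy \(8\)-sphere is either the standard \(S^8\) or the exotic \(8\)-sphere.

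The round metric handles \(S^8\), and Theorem A handles the exotic sphere. Orientation plays no role here, since reversing orientation does not change the underlying Riemannian manifold. This settles the first sentence.

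\emph{Dimension ten.} We have \(\Theta_{10}\cong\mathbb Z/6\cong\mathbb Z/2\oplus\mathbb Z/3\). Its elements are:
\begin{itemize}
\item the identity, the standard \(S^{10}\);
\item the two elements of order three, covered by Theorem B;
\item the element of order two and the two elements of order six.
\end{itemize}

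The forward implication is immediate: positive sectional curvature implies positive scalar curvature. For the converse, it suffices to show that the three spheres whose class has even order carry no positive scalar curvature metric. Then the spheres admitting positive scalar curvature are exactly the identity and the two order-three classes, all of which admit positive sectional curvature.

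\emph{The obstruction.} We use Hitchin's \(\alpha\)-invariant. In dimension \(10\equiv 2 \pmod 8\) it gives a homomorphism \(\alpha\colon\Theta_{10}\to KO_{10}\cong\mathbb Z/2\). By Lichnerowicz--Hitchin, \(\alpha(\Sigma)=0\) whenever the spin manifold \(\Sigma\) admits positive scalar curvature. Homotopy spheres are spin, and the spin structure is unique.

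Hitchin showed that \(\alpha\) is nonzero on \(\Theta_{10}\), detecting exotic spheres in dimensions \(1,2\pmod 8\) via the image of \(bP\)-complements and Adams' \(\mu\)-invariant. Since the target is \(\mathbb Z/2\), \(\alpha\) vanishes on the order-three subgroup and is surjective. Its kernel is therefore exactly the index-two subgroup \(\mathbb Z/3\), so \(\alpha\) is nonzero precisely on the elements of order two and six.

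Hence those three spheres admit no positive scalar curvature metric. Conversely, Stolz's theorem, or directly our results, shows that the kernel of \(\alpha\) is realised by positive curvature.

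\emph{Main obstacle.} The step needing the most care is the identification \(\ker\alpha=\{0\}\oplus\mathbb Z/3\). This amounts to citing Hitchin's computation that \(\alpha\neq 0\) on \(\Theta_{10}\), combined with the group-theoretic fact that any nonzero homomorphism \(\mathbb Z/6\to\mathbb Z/2\) has kernel the order-three subgroup.
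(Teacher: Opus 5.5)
Your proposal is correct and follows the paper's proof. In dimension eight, the round metric and Theorem A cover $\Theta_8\cong\mathbb Z/2$. In dimension ten, the nonzero Hitchin homomorphism $\alpha\colon\Theta_{10}\to KO_{10}\cong\mathbb Z/2$ has kernel the order-three subgroup, and those elements are the round sphere and the two spheres of Theorem B. The appeal to Stolz's theorem is superfluous, and on its own it would only give positive scalar curvature; the converse needs only $\ker\alpha\cong\mathbb Z/3$ together with Theorem B, as you also note.
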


The proofs rest on two main observations. The first is bundle-theoretic: the special \(S^3\)-\(S^3\) bundles arising in Sperança's construction admit equivariant polar normal forms. In suitable northern and southern trivialisations, the transition function is constant along meridians and conjugation-equivariant. The second observation is geometric and concerns the \(S^3\)-representation on the equatorial sphere. The southern connection-metric argument of He--Liu--Yau is already formulated in arbitrary dimension once its hypotheses are verified. Their northern construction, however, is written for a particular orthogonal \(S^3\)-representation. For that representation they obtain the estimate
\[
\|K_y\|_{\mathrm{op}}\le2
\]
for the infinitesimal action map. We show that orthogonality together with this bound is the only representation-specific input in the northern horizontal curvature estimates and the boundary comparison. This gives a positive-curvature construction for any polar bundle satisfying the same estimate, and Sperança's bundles satisfy precisely this bound.

We note that the representation bound above is specific to the He--Liu--Yau construction used in this paper, rather than an intrinsic restriction on the special \(S^3\)-\(S^3\) bundles considered here. Deng, Hu and Zhang \cite[Proposition~5.1]{DHZ} have obtained a compatible-disk theorem for an arbitrary finite-dimensional orthogonal \(S^3\)-representation and an arbitrary smooth conjugation-equivariant clutching map. Combined with the polar normal form and quotient description developed here, and with the gluing theorem of Reiser and Wraith \cite[Theorem~A(i)]{RW}, these results imply that every quotient arising from the special \(S^3\)-\(S^3\) bundles considered in this paper admits a metric of positive sectional curvature; see Remark~\ref{rem:general_bound}. This observation leads naturally to the study of arbitrary orthogonal \(S^3\)-representations for the constructions we consider and of the topology of the resulting quotients.

The paper is organised as follows. Sections~\ref{sec:star} and~\ref{sec:polartriv} develop the equivariant bundle theory and establish the polar normal form. Section~\ref{sec:HLY} extends the He--Liu--Yau construction to polar bundles satisfying
\[
\|K\|_{\mathrm{op}}\le2.
\]
In Section~\ref{sec:speranca}, we identify Sperança's \(8\)- and \(10\)-dimensional examples as star bundles, verify the required action-field bound, and apply the resulting transfer theorem. Corollary~C then follows from Theorems~A and~B together with the \(\alpha\)-invariant obstruction to positive scalar curvature.

\subsection*{Acknowledgments and AI Assistance}
Generative AI tools (OpenAI ChatGPT Astra and Anthropic Claude Opus 5.5) were
used under the author's direction to assist with mathematical exploration,
drafting, literature searches, and internal review of this work. The author
independently checked the mathematical arguments and references in the final
manuscript, made all final decisions concerning its contents, and takes full
responsibility for the paper. The author would like to thank Martin Kerin for helpful comments and suggestions.

\section{Star bundles, polar bundles, and their quotients}\label{sec:star}

In this section, we introduce the equivariant bundle structures used throughout the paper, motivated by the constructions of Sperança and He--Liu--Yau \cite{Sp,HLY}. Starting from Sperança's special \(S^3\)-\(S^3\) bundles, we incorporate the orthogonal action on the base sphere and its fixed axis into the notion of a \emph{star bundle}. We then introduce polar bundles, whose transition functions are constant along meridians, and describe their star quotients as two disks glued by an explicit attaching map. In the next section we show that every star bundle admits such a polar normal form.

We begin with Sperança's notion of a special \(G\)-\(G\) bundle \cite[\S1.1]{Sp}, specialised to \(G=S^3\) and with the principal action written on the right.

\begin{definition}[Special \(S^3\)-\(S^3\) bundle]
A \emph{special \(S^3\)-\(S^3\) bundle} is a smooth principal right \(S^3\)-bundle
\[
\pi\colon E\to M
\]
together with a smooth free left \(S^3\)-action \(\star\) on \(E\) which commutes with the principal right action and covers an \(S^3\)-action on \(M\). Thus
\[
q\star(ph)=(q\star p)h
\qquad
(q,h\in S^3,\ p\in E),
\]
and there is an \(S^3\)-action on \(M\) such that
\[
\pi(q\star p)=q\cdot\pi(p).
\]
\end{definition}

\begin{definition}[Star bundle]\label{def:starbundle}
A \emph{star bundle} consists of:
\begin{enumerate}
\item a Euclidean vector space
\[
W=\mathbb Re\oplus V,
\qquad \dim W=n+1,\quad n\ge3,
\]
where \(e\) is a distinguished unit vector and \(V=e^\perp\);
\item an orthogonal representation
\[
\rho\colon S^3\to \mathrm{O}(W)
\]
fixing \(e\), and hence preserving \(V\);
\item a special \(S^3\)-\(S^3\) bundle
\[
\pi\colon E\to S^n=S(W)
\]
whose induced \(S^3\)-action on the base is the restriction of \(\rho\). Equivalently,
\[
\pi(q\star p)=\rho(q)\pi(p)
\qquad
(q\in S^3,\ p\in E).
\]
\end{enumerate}
\end{definition}

We use geodesic polar coordinates around \(e\). Every \(\zeta\in S^n\setminus\{o_N,o_S\}\) can be written uniquely as
\[
\zeta=(\cos t)e+(\sin t)x,
\qquad 0<t<\pi,\quad x\in S(V).
\]
The endpoints \(t=0\) and \(t=\pi\) correspond to the poles
\[
o_N=e,\qquad o_S=-e,
\]
where the angular variable \(x\) is not defined. Set
\[
\mathcal U_N=S^n\setminus\{o_S\},
\qquad
\mathcal U_S=S^n\setminus\{o_N\}.
\]
Since \(\rho\) fixes \(e\) and preserves \(V\), it preserves the radial coordinate \(t\) and acts on the angular variable through \(\rho|_V\). We now introduce the polar model that will serve as a normal form for star bundles.

\begin{definition}[Polar bundle]\label{def:polar}
Let
\[
\theta\colon S(V)\to S^3
\]
be a smooth map satisfying
\begin{equation}\label{eq:equiv}
\theta(\rho(q)x)=q\,\theta(x)\,q^{-1}
\qquad
(q\in S^3,\ x\in S(V)).
\end{equation}
The \emph{polar bundle} \(P_\theta\to S^n\) is the principal right \(S^3\)-bundle obtained by gluing
\[
\mathcal U_N\times S^3
\qquad\text{and}\qquad
\mathcal U_S\times S^3
\]
over \(\mathcal U_N\cap\mathcal U_S\) via
\begin{equation}\label{eq:transition}
u_S=\theta(x)u_N.
\end{equation}
Thus the transition function depends only on the angular variable \(x\), and is constant along each meridian.

The principal action is
\[
(\zeta,u)h=(\zeta,uh),
\]
and the star action is defined in both trivialising charts by
\begin{equation}\label{eq:staraction}
q\star(\zeta,u)
=
(\rho(q)\zeta,qu).
\end{equation}
\end{definition}

The equivariance condition \eqref{eq:equiv} ensures that the star action \eqref{eq:staraction} is compatible with the transition function \eqref{eq:transition}. Indeed, if \(u_S=\theta(x)u_N\), then
\[
\theta(\rho(q)x)\,qu_N
=q\theta(x)u_N
=qu_S.
\]
Thus the star action is globally well defined. This is the conjugation-equivariance mechanism used by Sperança in \cite[(3.1)--(3.2)]{Sp}, where it guarantees that the corresponding local \(S^3\)-actions glue to a global action on the total space. The star action is smooth, including at the poles, because its local formula involves only the smooth linear action \(\rho\) on the base and left multiplication on \(S^3\), and does not involve the angular variable \(x\). The star action is free, since \(qu=u\) implies \(q=1\), and it commutes with the principal right action. The same compatibility mechanism also appears in \cite[\S2.4]{HLY}.

Let \(\pi_\theta:P_\theta\to S^n\) denote the bundle projection, and fix \(0<a<\pi\). Set
\[
U_N=\{t\le a\},
\qquad
U_S=\{t\ge a\}.
\]
These are \(S^3\)-invariant closed geodesic balls centred at \(o_N\) and \(o_S\), of radii
\[
R_N=a,\qquad R_S=\pi-a.
\]
Put
\[
P_\alpha=\pi_\theta^{-1}(U_\alpha),
\qquad
D_\alpha=P_\alpha/S^3_\star,
\qquad \alpha\in\{N,S\}.
\]
We identify \(U_\alpha\) with the closed disk of radius \(R_\alpha\) in \(V\) by the geodesic-polar coordinates
\[
\zeta_N=t\,x,
\qquad
\zeta_S=(\pi-t)\,x.
\]

The following lemma gives the quotient-disk description for general star bundle data. It generalises the explicit construction in \cite[\S2.5]{HLY} and is an instance of Sperança's reentrance mechanism \cite[\S3.1]{Sp}.

\begin{lemma}[Quotient disks and attaching map]\label{lem:attaching}
\leavevmode
\begin{enumerate}
\item In the product chart of \(P_\alpha\), the map
\[
\Psi_\alpha(\zeta_\alpha,u_\alpha)
=
\rho(u_\alpha)^{-1}\zeta_\alpha
\]
is constant on star orbits. It induces a diffeomorphism of \(D_\alpha\) onto the closed disk of radius \(R_\alpha\) in \(V\), with inverse
\[
Y\longmapsto[(Y,1)]_\star.
\]

\item With boundary markings
\[
y_\alpha=\rho(u_\alpha)^{-1}x\in S(V),
\]
the identification
\[
\partial D_N\longrightarrow\partial D_S
\]
induced by \eqref{eq:transition} is
\begin{equation}\label{eq:sigma}
y_S=\sigma(y_N),
\qquad
\sigma(y)=\rho(\theta(y))^{-1}y.
\end{equation}
The map \(\sigma\) is a diffeomorphism of \(S(V)\) with inverse
\[
\widehat\sigma(y)=\rho(\theta(y))y.
\]

\item
\[
P_\theta/S^3_\star
\cong
 D_N^n\cup_\sigma D_S^n.
\]
\end{enumerate}
\end{lemma}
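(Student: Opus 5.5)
The plan is to work chart by chart, using only the explicit formula \eqref{eq:staraction} for the star action and the transition rule \eqref{eq:transition}.

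\emph{Part (1).} In the chart of \(P_\alpha\), the star action reads \(q\star(\zeta_\alpha,u_\alpha)=(\rho(q)\zeta_\alpha,qu_\alpha)\). Here we use that \(\rho\) fixes \(e\) and preserves \(t\), so in the geodesic-polar identification it acts on \(\zeta_\alpha\) linearly through \(\rho|_V\). Invariance of \(\Psi_\alpha\) is then one line:
\[
\rho(qu_\alpha)^{-1}\rho(q)\zeta_\alpha=\rho(u_\alpha)^{-1}\zeta_\alpha.
\]
The map \(s\colon Y\mapsto (Y,1)\) is a smooth global slice. Every orbit meets it exactly once, because \(u_\alpha^{-1}\star(\zeta_\alpha,u_\alpha)=(\rho(u_\alpha)^{-1}\zeta_\alpha,1)\), and \(q\star(Y,1)=(Y',1)\) forces \(q=1\). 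Hence \(\Psi_\alpha\circ s=\mathrm{id}\), and the composite \(s\circ\Psi_\alpha\) is the orbit-wise retraction \(p\mapsto u_\alpha^{-1}\star p\). The star action is free with compact group, so \(P_\alpha\to D_\alpha\) is a principal bundle and \(D_\alpha\) is a smooth manifold with boundary. The descended map \(\bar\Psi_\alpha\) and \([s]\) are smooth and mutually inverse, so \(D_\alpha\) is diffeomorphic to the closed disk of radius \(R_\alpha\). Smoothness at the pole is automatic because \(\zeta_\alpha\) is a smooth chart there.

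\emph{Part (2).} On \(\{t=a\}\) both charts apply. Take a point with angular coordinate \(x\) and chart values \(u_N\), \(u_S=\theta(x)u_N\). The boundary markings are \(y_N=\rho(u_N)^{-1}x\) and \(y_S=\rho(u_S)^{-1}x=\rho(u_N)^{-1}\rho(\theta(x))^{-1}x\). The task is to express \(\theta(x)\) through \(y_N\). The equivariance \eqref{eq:equiv} with \(q=u_N^{-1}\) gives \(\theta(y_N)=u_N^{-1}\theta(x)u_N\). Therefore
\[
\rho(\theta(y_N))^{-1}y_N=\rho(u_N)^{-1}\rho(\theta(x))^{-1}\rho(u_N)\rho(u_N)^{-1}x=y_S,
\]
which proves \eqref{eq:sigma}. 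Well-definedness on the quotient is already built into part (1), since the markings are the restrictions of \(\Psi_\alpha\) divided by the constant radius.

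For the inverse, we need \(\theta(\sigma(y))=\theta(y)\). This follows from \eqref{eq:equiv} with \(q=\theta(y)^{-1}\), since \(\theta(y)\) commutes with itself. Then \(\widehat\sigma(\sigma(y))=\rho(\theta(y))\rho(\theta(y))^{-1}y=y\), and the same identity gives \(\sigma\circ\widehat\sigma=\mathrm{id}\). Both maps are smooth, so \(\sigma\) is a diffeomorphism. This identity \(\theta\circ\sigma=\theta\) is the one non-obvious point. I would check it first, since everything else is bookkeeping.

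\emph{Part (3).} Write \(P_\theta=P_N\cup P_S\), glued along \(\pi_\theta^{-1}\{t=a\}\). Both pieces are star-invariant, so passing to quotients gives \(P_\theta/S^3_\star=D_N\cup D_S\), glued along their boundaries by the map computed in (2), namely \(\sigma\). To obtain a diffeomorphism rather than only a homeomorphism, I would instead use the slightly larger open pieces \(\{t<a+\varepsilon\}\) and \(\{t>a-\varepsilon\}\). These are still star-invariant, and parts (1) and (2) apply on the overlap collar, where the map is \(Y\mapsto \frac{\pi-|Y|}{|Y|}\,\sigma\!\bigl(Y/|Y|\bigr)\). The quotient is then the smooth gluing \(D_N^n\cup_\sigma D_S^n\) in the usual collar sense.
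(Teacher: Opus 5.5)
Your proposal is correct and follows essentially the same route as the paper's proof: the one-line invariance of \(\Psi_\alpha\) plus the slice \(\{u_\alpha=1\}\) for (1), equivariance with \(q=u_N^{-1}\) and the identities \(\theta\circ\sigma=\theta=\theta\circ\widehat\sigma\) (from \(q=\theta(y)^{-1}\) and \(q=\theta(y)\)) for (2), and the \(t\)-independence of the transition on a collar for (3). The only slip is in your collar formula in (3). Since \(Y_N=t\,y_N\) and \(Y_S=(\pi-t)\,y_S\), the overlap transition is \(Y\mapsto(\pi-|Y|)\,\sigma(Y/|Y|)\), and the extra factor \(1/|Y|\) should be dropped.
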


\begin{proof}
For \(\alpha\in\{N,S\}\), consider
\[
\Psi_\alpha(\zeta_\alpha,u_\alpha)
=
\rho(u_\alpha)^{-1}\zeta_\alpha.
\]
This map is constant on star orbits, since
\[
\Psi_\alpha(\rho(q)\zeta_\alpha,qu_\alpha)
=
\rho(qu_\alpha)^{-1}\rho(q)\zeta_\alpha
=
\rho(u_\alpha)^{-1}\zeta_\alpha.
\]
Moreover, every star orbit contains a unique point with fibre coordinate \(1\). Indeed,
\[
u_\alpha^{-1}\star(\zeta_\alpha,u_\alpha)
=
(\rho(u_\alpha)^{-1}\zeta_\alpha,1).
\]
Thus the slice
\[
 D^n_{R_\alpha}\times\{1\}
\]
meets every star orbit exactly once, and \(\Psi_\alpha\) descends to a diffeomorphism
\[
D_\alpha
\longrightarrow
D^n_{R_\alpha},
\]
with inverse
\[
Y\longmapsto[(Y,1)]_\star.
\]
This proves (1).

We now determine the identification of the two boundary spheres. Let \(x\in S(V)\) be the angular coordinate of a point on the common boundary \(t=a\), and write
\[
y_\alpha=\rho(u_\alpha)^{-1}x.
\]
The transition relation is
\[
u_S=\theta(x)u_N.
\]
Since \(x=\rho(u_N)y_N\), the equivariance of \(\theta\) gives
\[
\theta(x)
=
\theta(\rho(u_N)y_N)
=
u_N\theta(y_N)u_N^{-1}.
\]
Therefore
\[
\begin{aligned}
y_S
&=\rho(u_S)^{-1}x\\
&=\rho(\theta(x)u_N)^{-1}x\\
&=\rho\!\left(u_N^{-1}\theta(x)^{-1}u_N\right)y_N\\
&=\rho(\theta(y_N))^{-1}y_N.
\end{aligned}
\]
Thus the attaching map is
\[
\sigma(y)=\rho(\theta(y))^{-1}y.
\]

To see that \(\sigma\) is a diffeomorphism, define
\[
\widehat\sigma(y)=\rho(\theta(y))y.
\]
Using the equivariance of \(\theta\) with \(q=\theta(y)^{-1}\) and \(q=\theta(y)\), respectively, we obtain
\[
\theta(\sigma(y))=\theta(y),
\qquad
\theta(\widehat\sigma(y))=\theta(y).
\]
Consequently,
\[
\widehat\sigma(\sigma(y))
=
\rho(\theta(y))\rho(\theta(y))^{-1}y
=
y,
\]
and similarly
\[
\sigma(\widehat\sigma(y))=y.
\]
Hence
\[
\widehat\sigma=\sigma^{-1},
\]
which proves (2).

Finally,
\[
P_\theta=P_N\cup P_S,
\]
and, by (1), the quotients \(D_N\) and \(D_S\) are closed \(n\)-disks. Since the transition function
\[
u_S=\theta(x)u_N
\]
is independent of the radial variable \(t\), the induced identification on a product collar of the common boundary is constant in the collar direction and has angular part \(\sigma\). Thus the quotient is the smooth disk gluing
\[
P_\theta/S^3_\star
\cong
D_N^n\cup_\sigma D_S^n.
\]
This proves (3).
\end{proof}

\section{Equivariant polar trivialisations}\label{sec:polartriv}

The polar bundles introduced in the previous section provide the objects needed for the metric construction. We now show that every star bundle admits equivariant northern and southern trivialisations whose transition function is constant along meridians. The construction uses an \(S^3\)-invariant connection and parallel transport along meridians from the two fixed poles. In particular, every star bundle admits, up to equivariant bundle isomorphism, a polar representation $P_\theta$ for a smooth conjugation-equivariant map $\theta\colon S(V)\to S^3$. This provides the bridge from Sperança's special \(S^3\)-\(S^3\) bundles to the polar framework used in the He--Liu--Yau metric construction.

\begin{proposition}\label{prop:polar}
Let
\[
\pi\colon E\to S^n=S(W)
\]
be a star bundle. Then there exist a smooth map
\[
\theta\colon S(V)\to S^3
\]
satisfying
\[
\theta(\rho(q)x)=q\,\theta(x)\,q^{-1},
\]
and a principal-bundle isomorphism
\[
\Phi\colon P_\theta\longrightarrow E
\]
which is equivariant with respect to the star actions. Consequently,
\[
E/S^3_\star\cong P_\theta/S^3_\star.
\]
\end{proposition}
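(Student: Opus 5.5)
The plan is the one indicated at the start of this section: choose an \(S^3\)-invariant connection and trivialise by parallel transport along meridians. First, I will fix a principal connection \(\omega\) on \(E\) that is invariant under the star action. Any principal connection can be averaged over the compact group \(S^3\) acting by \(\star\). This is legitimate because the star action commutes with the right action, so each \(q\star(\cdot)\) is a principal bundle automorphism and pulls connections back to connections. Averaging (convex combination) preserves the connection property, and the result has a horizontal distribution \(H\) satisfying \(d(q\star)H=H\).

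Next I will pick frames over the two poles. The poles \(o_N,o_S\) are fixed by \(\rho\), so \(S^3_\star\) acts on the fibre \(E_{o_N}\). This action commutes with the simply transitive right action and is free. Pick \(p_N\in E_{o_N}\). Then \(q\star p_N=p_N\,\chi_N(q)\) defines a map \(\chi_N\colon S^3\to S^3\). Freeness and commutation show that \(\chi_N\) is an injective homomorphism, hence an automorphism of \(S^3\), so an inner automorphism. Replacing \(p_N\) by \(p_Nh\) conjugates \(\chi_N\), so I can choose \(p_N\) with \(q\star p_N=p_Nq\). I choose \(p_S\) with the same property.

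Now define the northern trivialisation over \(\mathcal U_N\). For \(\zeta=(\cos t)e+(\sin t)x\), let \(\gamma_\zeta\) be the minimising meridian from \(o_N\) to \(\zeta\), and set \(\Phi_N(\zeta,u)=\tau_{\gamma_\zeta}(p_N)\,u\), where \(\tau\) is horizontal lift. Smoothness on \(\mathcal U_N\), including at the pole, follows from writing the meridians as radial lines \(s\mapsto \exp_{o_N}(sv)\). Horizontal lifts of radial lines depend smoothly on \(v\in T_{o_N}S^n\); this is the standard radial-gauge argument. The map is right-equivariant by construction.

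Star-equivariance follows from invariance of \(H\). The action \(q\star\) maps the horizontal lift of \(\gamma_\zeta\) from \(p_N\) to the horizontal lift of \(\rho(q)\gamma_\zeta=\gamma_{\rho(q)\zeta}\) from \(q\star p_N=p_Nq\). Hence \(q\star\Phi_N(\zeta,u)=\tau_{\gamma_{\rho(q)\zeta}}(p_N)\,qu=\Phi_N(\rho(q)\zeta,qu)\), which is exactly \eqref{eq:staraction}. The southern trivialisation \(\Phi_S\) is defined the same way on \(\mathcal U_S\).

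On the overlap, define \(\theta\) by \(\Phi_N(\zeta,u_N)=\Phi_S(\zeta,u_S)\iff u_S=g(\zeta)u_N\). The key point is that \(g\) is constant along meridians. The full meridian from \(o_N\) through \(\zeta\) to \(o_S\) is a single geodesic, so the horizontal lift from \(p_N\) is a single horizontal curve. Both trivialisations give horizontal sections along this meridian. Two horizontal sections differ by a constant right multiplication, so \(g(\zeta)=\theta(x)\) depends only on \(x\). This requires care with orientation, since the southern meridian runs backward, but reversing the parametrisation does not affect horizontality. Smoothness of \(\theta\) follows by evaluating \(g\) at \(t=\pi/2\).

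Equivariance of \(\theta\) then follows formally from star-equivariance of both charts: \(\theta(\rho(q)x)qu_N=q\theta(x)u_N\), which is \eqref{eq:equiv}. Gluing \(\Phi_N\) and \(\Phi_S\) gives the isomorphism \(\Phi\colon P_\theta\to E\), and taking star quotients gives the final claim.

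I expect the main obstacle to be the pole normalisation \(q\star p=pq\), that is, identifying \(\chi_N\) as an inner automorphism, together with smoothness at the poles. Everything else is formal.
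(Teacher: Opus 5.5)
Your proposal is correct and follows the paper's proof essentially step for step: you average a connection over the star action, normalise the pole frames using that \(\chi_N\) is an inner automorphism, build sections by radial parallel transport, derive star-equivariance from the invariant horizontal distribution, and show the transition function is constant along each full meridian before deriving \(\theta(\rho(q)x)=q\theta(x)q^{-1}\) formally. The only step you compress is why the injective homomorphism \(\chi_N\colon S^3\to S^3\) is surjective; the paper justifies this by showing \(d\chi_N\) is injective, so the image is an open and closed subgroup of the connected group \(S^3\).
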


\begin{proof}

\emph{Step 1: an invariant connection.}
Choose a smooth principal connection \(\omega_0\) on \(E\). For \(q\in S^3\), let
\[
L_q(p)=q\star p.
\]
Since the star action commutes with the principal right action and covers \(\rho\), the map \(L_q\) is a principal-bundle automorphism. Hence, \(L_q^*\omega_0\) is again a principal connection.

Let \(dq\) denote normalised Haar measure on \(S^3\), and define
\[
\omega=\int_{S^3}L_q^*\omega_0\,dq.
\]
The defining properties of a principal connection are linear in the connection form, so \(\omega\) is again a principal connection. Moreover, for every \(a\in S^3\),
\[
L_a^*\omega
=
\int_{S^3}L_{qa}^*\omega_0\,dq
=
\omega
\]
by invariance of Haar measure. Thus \(\omega\) is invariant under the star action.

\emph{Step 2: normalising the star action over the poles.}
Since \(\rho\) fixes \(o_N=e\), the fibre \(E_{o_N}\) is preserved by the star action. Choose \(p_0\in E_{o_N}\). Since the principal right action is simply transitive on \(E_{o_N}\), for every \(q\in S^3\) there is a unique element \(\varphi(q)\in S^3\) such that
\[
q\star p_0=p_0\varphi(q).
\]
The map \(\varphi\colon S^3\to S^3\) is smooth, since both the star orbit map \(q\mapsto q\star p_0\) and the fibre identification \(h\mapsto p_0h\) are smooth.

Since the star and principal actions commute, \(\varphi\) is a homomorphism:
\[
\begin{aligned}
(q_1q_2)\star p_0
&=q_1\star(q_2\star p_0)\\
&=q_1\star(p_0\varphi(q_2))\\
&=(q_1\star p_0)\varphi(q_2)\\
&=p_0\varphi(q_1)\varphi(q_2).
\end{aligned}
\]
Hence
\[
\varphi(q_1q_2)=\varphi(q_1)\varphi(q_2).
\]

The map \(\varphi\) is injective because the star action is free. Since \(\varphi\) is a Lie group homomorphism,
\[
\varphi(\exp(tX))
=
\exp\bigl(t\,d\varphi_1(X)\bigr)
\qquad
(X\in\operatorname{Im}\mathbb H).
\]
If \(d\varphi_1(X)=0\), then
\[
\varphi(\exp(tX))=1
\]
for every \(t\). By injectivity of \(\varphi\), the one-parameter subgroup \(\exp(tX)\) is trivial, and hence \(X=0\). Thus \(d\varphi_1\) is injective. Since the domain and codomain have the same dimension, \(d\varphi_1\) is an isomorphism. It follows from the inverse function theorem that the image of \(\varphi\) contains a neighbourhood of the identity, and hence is an open subgroup of \(S^3\). It is also compact, hence closed. Since \(S^3\) is connected, \(\varphi\) is surjective and therefore an automorphism.

Every automorphism of \(S^3\cong \mathrm{SU}(2)\) is inner \cite[Lemma~5.1]{BJ}. Thus there exists \(c\in S^3\) such that
\[
\varphi(q)=cqc^{-1}.
\]
Set
\[
s_0=p_0c.
\]
Then
\begin{equation}\label{eq:model}
q\star s_0
=
(q\star p_0)c
=
p_0cqc^{-1}c
=
s_0q
\qquad(q\in S^3).
\end{equation}
Since \(\rho\) also fixes \(o_S=-e\), the same argument gives a point \(s_0'\in E_{o_S}\) satisfying
\[
q\star s_0'=s_0'q
\qquad(q\in S^3).
\]

\emph{Step 3: sections by parallel transport along meridians.}
For \(x\in S(V)\), let
\[
c_x(t)=(\cos t)e+(\sin t)x,
\qquad 0\le t\le\pi,
\]
be the unit-speed meridian from \(o_N\) to \(o_S\) in the direction \(x\). For \(0\le t<\pi\), define \(s_N(c_x(t))\) by parallel transporting \(s_0\) along \(c_x|_{[0,t]}\) with respect to the connection \(\omega\).

Equivalently, since
\[
\exp_{o_N}:B_\pi(0)\subset V\longrightarrow\mathcal U_N
\]
is a diffeomorphism, we may write
\[
s_N(\exp_{o_N}v)=\tau_v(s_0),
\]
where \(\tau_v\) denotes parallel transport along
\[
\tau\longmapsto\exp_{o_N}(\tau v),
\qquad 0\le\tau\le1.
\]
Parallel transport depends smoothly on \(v\). Hence, \(s_N\) is a smooth section over \(\mathcal U_N\).

Similarly,
\[
\exp_{o_S}:B_\pi(0)\subset V\longrightarrow\mathcal U_S
\]
is a diffeomorphism. Parallel transporting \(s_0'\) along the radial geodesics from \(o_S\) defines a smooth section
\[
s_S\colon\mathcal U_S\to E.
\]

\emph{Step 4: equivariance of the radial sections.}
Since \(\rho(q)\) is orthogonal and fixes \(o_N=e\),
\[
\rho(q)\circ c_x=c_{\rho(q)x}.
\]
Moreover, \(L_q(p)=q\star p\) preserves the connection \(\omega\), and therefore maps horizontal lifts to horizontal lifts. The principal right action also preserves the horizontal distribution.

Thus
\[
t\longmapsto q\star s_N(c_x(t))
\]
is a horizontal lift of \(c_{\rho(q)x}\) beginning at
\[
q\star s_0=s_0q.
\]
On the other hand,
\[
t\longmapsto s_N(c_{\rho(q)x}(t))\,q
\]
is a horizontal lift of the same curve with the same initial point. By uniqueness of horizontal lifts,
\[
q\star s_N(c_x(t))
=
s_N(c_{\rho(q)x}(t))\,q.
\]
Equivalently,
\begin{equation}\label{eq:sectionequiv}
s_N(\rho(q)\zeta)
=
(q\star s_N(\zeta))q^{-1},
\qquad
\zeta\in\mathcal U_N.
\end{equation}
The same argument gives the corresponding identity for \(s_S\).

\emph{Step 5: polar trivialisations.}
Define
\[
\Phi_N\colon\mathcal U_N\times S^3\longrightarrow \pi^{-1}(\mathcal U_N),
\qquad
\Phi_N(\zeta,u)=s_N(\zeta)u.
\]
Since \(s_N\) is a smooth section, \(\Phi_N\) is a smooth principal-bundle trivialisation. By \eqref{eq:sectionequiv} and the commutation of the star and principal actions,
\[
\begin{aligned}
q\star\Phi_N(\zeta,u)
&=(q\star s_N(\zeta))u\\
&=s_N(\rho(q)\zeta)\,qu\\
&=\Phi_N(\rho(q)\zeta,qu).
\end{aligned}
\]
Thus, in this trivialisation, the star action has the polar form
\[
q\star(\zeta,u)=(\rho(q)\zeta,qu).
\]
The same construction gives a star-equivariant principal-bundle trivialisation
\[
\Phi_S\colon\mathcal U_S\times S^3\longrightarrow \pi^{-1}(\mathcal U_S).
\]

\emph{Step 6: the polar transition function.}
On \(\mathcal U_N\cap\mathcal U_S\), define the transition function \(\theta\) by
\[
s_N(\zeta)=s_S(\zeta)\theta(\zeta).
\]
Fix \(x\in S(V)\). Both \(s_N\circ c_x\) and \(s_S\circ c_x\) are horizontal lifts of the meridian \(c_x\). Choose \(t_0\in(0,\pi)\) and let \(g\in S^3\) be determined by
\[
s_N(c_x(t_0))=s_S(c_x(t_0))g.
\]
Since right translation preserves the horizontal distribution, \(s_S\circ c_x\,g\) is a horizontal lift of \(c_x\) with the same value as \(s_N\circ c_x\) at \(t_0\). Uniqueness of horizontal lifts therefore gives
\[
s_N(c_x(t))=s_S(c_x(t))g
\]
for all \(0<t<\pi\). Hence \(\theta(c_x(t))\) is independent of \(t\); we write it simply as \(\theta(x)\). Since the transition function is smooth on the overlap, this defines a smooth map
\[
\theta\colon S(V)\to S^3.
\]

The identity \(s_N=s_S\theta\) implies
\[
\Phi_N(\zeta,u_N)
=
\Phi_S(\zeta,\theta(x)u_N),
\]
so the transition between the two trivialisations is
\[
u_S=\theta(x)u_N.
\]

Finally, using \eqref{eq:sectionequiv} for both sections,
\[
\begin{aligned}
s_S(\rho(q)\zeta)\theta(\rho(q)x)
&=s_N(\rho(q)\zeta)\\
&=(q\star s_N(\zeta))q^{-1}\\
&=(q\star s_S(\zeta))\theta(x)q^{-1}\\
&=s_S(\rho(q)\zeta)\,q\theta(x)q^{-1}.
\end{aligned}
\]
Therefore
\[
\theta(\rho(q)x)=q\theta(x)q^{-1}.
\]
Thus \(\theta\) is conjugation-equivariant, and \(\Phi_N,\Phi_S\) assemble to a star-equivariant principal-bundle isomorphism
\[
\Phi\colon P_\theta\longrightarrow E.
\]
\end{proof}

The averaging argument and the construction of equivariant local trivialisations have precedents in Sperança's treatment of special \(G\)-\(G\) bundles \cite[Lemmas~4.3 and~4.5]{Sp}. Proposition~\ref{prop:polar} produces the northern and southern trivialisations simultaneously from the two fixed poles, so that their transition function is constant along each meridian, as required for the metric construction in the next section.

\section{The He--Liu--Yau construction for star bundles with
\texorpdfstring{\(\|K\|_{\mathrm{op}}\le2\)}{|K|<=2}}
\label{sec:HLY}

We now turn to the geometric part of the construction, building on the compatible-disk construction of He--Liu--Yau \cite{HLY}. In this section, we build compatible metrics on the two quotient disks and control their sectional curvature and boundary geometry. We follow the curvature convention of He--Liu--Yau and write
\[
\mathcal K(X\wedge Y)
\]
for the sectional-curvature numerator. For an orthonormal pair \(X,Y\), this is the sectional curvature of the plane they span. 

For a Riemannian manifold with smooth boundary, we take the second fundamental form with respect to the outward unit normal \(\nu\):
\[
\operatorname{II}(v,w)=\langle\nabla_v\nu,w\rangle.
\]
With this convention, the boundary of a sufficiently small geodesic ball has positive definite second fundamental form; see \cite[\S3.2.1]{Pet}. In the construction below, we write
\[
\mathcal B_N,\mathcal B_S
\]
for the second fundamental forms of the source boundaries \(\partial P_N\) and \(\partial P_S\), equipped with \(G_N\) and \(G_S\), and
\[
B_N,B_S
\]
for the second fundamental forms of the quotient boundaries \(\partial D_N\) and \(\partial D_S\), equipped with \(g_N\) and \(g_S\).

Throughout this section, \(S^3\subset\mathbb H\) is the group of unit quaternions, \(Q\) is the Euclidean inner product on \(\operatorname{Im}\mathbb H\), and \(h_{S^3}\) is the corresponding bi-invariant round metric of sectional curvature one.

Fix \(0<a<\pi\). As in Section~\ref{sec:star}, let
\[
U_N=\{t\le a\},
\qquad
U_S=\{t\ge a\},
\]
be the northern and southern geodesic disks in \(S^n\), and set
\[
P_N=\pi_\theta^{-1}(U_N),
\qquad
P_S=\pi_\theta^{-1}(U_S).
\]
Their star quotients
\[
D_N=P_N/S^3_\star,
\qquad
D_S=P_S/S^3_\star
\]
are \(n\)-disks, and Lemma~\ref{lem:attaching} identifies the full quotient as
\[
P_\theta/S^3_\star
\cong
D_N\cup_\sigma D_S.
\]

He--Liu--Yau \cite{HLY} construct positively curved metrics on these two quotient disks by equipping \(P_N\) and \(P_S\) with different \(S^3\)-invariant metrics and passing to the star quotients. On the southern piece they use a connection metric over the round disk \(U_S\). On the northern piece they use a doubly warped metric, which in our notation has the form
\[
ds^2+F(s)^2h_{S^{n-1}}+r(s)^2h_{S^3}.
\]
The metric profiles are chosen so that the quotient metrics agree along their common boundary and the sum of their outward second fundamental forms is positive. The two quotient disks are then smoothly glued, while preserving positive sectional curvature, by the gluing theorem of Reiser--Wraith \cite[Theorem~A(i)]{RW}.

The northern construction in \cite{HLY} is carried out for a particular orthogonal \(S^3\)-representation. For that representation, He--Liu--Yau obtain the estimate
\[
\|K_y\|_{\mathrm{op}}\le2
\]
for the corresponding infinitesimal action map. Our observation is that this estimate is the only representation-specific input in their horizontal curvature estimates and boundary comparison. Their argument therefore extends to any polar bundle for which the same bound holds.

For \(y\in S(V)\), define the infinitesimal action map
\[
K_y\colon \operatorname{Im}\mathbb H\longrightarrow T_yS(V)
\]
by
\[
K_y\xi
=
\left.\frac{d}{d\tau}\right|_{\tau=0}
\rho(e^{\tau\xi})y.
\]
Thus \(K_y\xi\) is the value at \(y\) of the infinitesimal vector field generated by \(\xi\). We equip \(\operatorname{Im}\mathbb H\) with the Euclidean inner product \(Q\) and \(T_yS(V)\) with the unit round metric, and write
\[
\|K_y\|_{\mathrm{op}}
=
\sup_{|\xi|=1}|K_y\xi|.
\]

\begin{theorem}\label{thm:HLYgeneral}
Let
\[
P_\theta\to S^n,
\qquad n\ge3,
\]
be a polar bundle. Suppose that
\[
\|K_y\|_{\mathrm{op}}\le2
\qquad
\text{for every }y\in S(V).
\]
Then the star quotient
\[
P_\theta/S^3_\star
\]
admits a smooth Riemannian metric with strictly positive sectional curvature.
\end{theorem}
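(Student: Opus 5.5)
The proof follows the He--Liu--Yau scheme. Use Lemma~\ref{lem:attaching} to write \(P_\theta/S^3_\star\cong D_N\cup_\sigma D_S\). Build \(S^3_\star\)-invariant metrics \(G_N\) on \(P_N\) and \(G_S\) on \(P_S\), and let \(g_N,g_S\) be the induced submersion metrics on \(D_N,D_S\). Then check the two hypotheses of Reiser--Wraith \cite[Theorem~A(i)]{RW}:
\begin{enumerate}
\item both \(g_\alpha\) have positive sectional curvature;
\item \(\sigma\) is an isometry \((\partial D_N,g_N)\to(\partial D_S,g_S)\) and \(B_N+\sigma^*B_S>0\).
\end{enumerate}
Since the star action is free and by isometries, O'Neill's formula gives \(\sec(g_\alpha)\ge\sec(G_\alpha)\) on horizontal planes. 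It therefore suffices to prove positivity of \(G_\alpha\) on \(G_\alpha\)-horizontal planes for the star action. Where \(G_\alpha\) is not everywhere positive, the A-tensor term supplies the remainder, exactly as in \cite{HLY}.

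\emph{Southern disk.} Take on \(P_S\) the connection metric built from the flat product connection in the southern chart, the round metric on \(U_S\), and \(r_S^2h_{S^3}\) on the fibres. In the chart, the star action \((\zeta,u)\mapsto(\rho(q)\zeta,qu)\) is isometric because \(\rho\) is orthogonal and \(h_{S^3}\) is bi-invariant. The He--Liu--Yau southern argument is dimension-free once these hypotheses hold. It shows that for \(R_S=\pi-a\) small and suitable \(r_S\), the quotient has positive curvature and \(\partial D_S\) is strictly convex, with \(B_S\) of order \(\cot(\pi-a)\). The claim is that invariance, orthogonality and roundness of the base are all that their computation uses, and I would check this line by line.

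\emph{Northern disk.} On \(P_N\cong U_N\times S^3\), with \(s=t\), take
\[
G_N=ds^2+F(s)^2h_{S^{n-1}}+r(s)^2h_{S^3},
\]
with the profiles \(F,r\) of \cite{HLY}: \(F\) odd with \(F'(0)=1\), \(r\) even with \(r(0)>0\), concave near the boundary. Horizontal vectors for the star action at \((sx,u)\) are those orthogonal to \((F(s)K_x\xi,\ r(s)\,\xi u)\). The curvature of a horizontal plane splits into
\begin{enumerate}
\item the warped-product terms \(-F''/F\), \((1-F'^2)/F^2\), \(-r''/r\), \(-F'r'/(Fr)\), \((1-r'^2)/r^2\);
\item cross terms produced when a horizontal vector has both an \(S^{n-1}\) and an \(S^3\) component.
\end{enumerate}
The constraint defining horizontality ties these two components through \(K_x\). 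In \cite{HLY}, the representation enters only through inequalities of the form \(|K_x\xi|\le2|\xi|\). This bound controls the relative size of the two components and hence the negative part of the cross terms. I would isolate each occurrence and replace it by the hypothesis \(\|K_y\|_{\mathrm{op}}\le2\). Orthogonality of \(\rho\) is used separately: it makes the star action isometric for \(G_N\), and it makes \(K_y\xi\) tangent to \(S(V)\).

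\emph{Boundary matching.} In the quotient chart \(Y\mapsto[(Y,1)]_\star\), the boundary metric of \(g_N\) at \(y\) is the round metric \(F(a)^2h\) modified in the \(K_y\)-directions by the fibre term \(r(a)^2\). Concretely it is a Cheeger-type deformation,
\[
g^{-1}=F^{-2}h^{-1}+r^{-2}K_yK_y^{*}.
\]
The southern boundary metric has the same form with its own parameters. So the matching reduces to equality of two scalars at \(s=a\), tuned by the profile choices, together with \(\sigma\)-invariance. Invariance holds because, by Lemma~\ref{lem:attaching}, \(\sigma(y)=\rho(\theta(y))^{-1}y\) and \(\theta(\sigma(y))=\theta(y)\). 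The Cheeger-type boundary metric is invariant under \(\rho\), so the differential of \(\sigma\) acts isometrically modulo \(K_y\)-directions, which are absorbed. This is the step I expect to be the main obstacle. The northern \(B_N\) can be negative in the fibre-type directions where \(r'>0\), and the positivity of \(B_N+\sigma^*B_S\) was proved in \cite{HLY} with explicit constants. I would redo that comparison with \(\|K\|_{\mathrm{op}}\) kept as a parameter, showing that the negative contribution is bounded by a constant times \(\|K\|^2_{\mathrm{op}}r r'\). With the bound \(2\), the He--Liu--Yau choices of \(a\), \(F\) and \(r\) then still dominate it by the southern convexity. The Reiser--Wraith theorem then yields the smooth positively curved metric.
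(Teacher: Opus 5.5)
There is a genuine gap in your boundary matching, and it comes from your choice of southern connection. You put the flat product connection of the southern chart on \(P_S\) and the product structure of the northern chart on \(P_N\), so the clutching enters only through \(\sigma\). You then assert that \(\sigma\) is an isometry of the Cheeger-type metric \(h^{-1}=F^{-2}h_{S^{n-1}}^{-1}+r^{-2}K_yK_y^*\) because its \(K\)-directional part is ``absorbed''. It is not. Differentiating \(\sigma(y)=\rho(\theta(y))^{-1}y\) gives
\[
d\sigma_y(Y)=\rho(\theta(y))^{-1}Y-K_{\sigma(y)}\bigl(\theta(y)^{-1}d\theta_y(Y)\bigr).
\]
The first term is an \(h\)-isometry. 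Adding an orbit direction, however, changes \(h\)-length, because \(h\) only shrinks orbit directions by the fibre factor and does not collapse them. Equivalently, pull your southern boundary metric back to the northern chart by \(u_S=\theta(x)u_N\). It becomes \(F^2h_{S^{n-1}}+r^2\,|u^{-1}\vartheta u+u^{-1}du|^2\) with \(\vartheta=\theta^{-1}d\theta\), which is not the northern product metric. Suppose \(K_y\) is injective and \(Y\perp K_y(\operatorname{Im}\mathbb H)\) has \(\vartheta_y(Y)\neq0\). Then the star-quotient length of \(Y\) for this metric is strictly larger than \(F|Y|\), its northern quotient length, even after you match the two scalars. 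Reiser--Wraith needs an exact isometry of boundary metrics, so the gluing step fails for general \(\theta\), and nothing in the curvature estimates can repair it.

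The missing idea is the interpolating connection that the paper, following He--Liu--Yau, puts on \(P_S\). It is given by \(A_N=\chi\,\theta^{-1}d\theta\), equivalently \(A_S=(\chi-1)\,d\theta\,\theta^{-1}\), with \(\chi=0\) near \(t=a\) and \(\chi=1\) near \(o_S\); see \eqref{eq:potentials}. It is star-invariant by the conjugation-equivariance of \(\theta\). Near the gluing sphere it is the product connection of the northern chart. Both source boundary metrics are then literally \(F_a^2h_{S^{n-1}}+r_a^2h_{S^3}\) in one chart with one star action, so their quotients agree under the identification that Lemma~\ref{lem:attaching} computes to be \(\sigma\). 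Near \(o_S\) it is the southern product connection, which gives smoothness there. The price is curvature, \(\Omega_N=d\chi\wedge\vartheta+\chi(\chi-1)\vartheta\wedge\vartheta\), and this cannot be avoided when \(\theta\) is essential. Southern positivity therefore does not follow from invariance, orthogonality and roundness alone. It needs Proposition~\ref{fact:prop31}, with \(a>\pi/2\), \(\varphi=-A_0\cos t\), \(A_0|\cos a|\ge\Lambda\) as in \eqref{eq:Lambda}, and small fibres \(r_S^2=\varepsilon e^{-\varepsilon A_0\cos t}\). This also corrects your signs. Since \(r_S\) grows towards \(o_S\), it is the southern boundary that is concave in fibre directions, with \(\mathcal B_S=-\mu_S|X|^2-q_s|U|^2\). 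With outward normal \(\partial_s\), the northern fibre term is \(+r'(\ell_N)/r_a\), and the profile is chosen so that this equals \(q_s\). The fibre terms cancel exactly, giving \(B_N+\sigma^*B_S=(\mu_N-\mu_S)|X|^2\), so no \(\|K\|_{\mathrm{op}}^2rr'\) estimate is needed. The bound \(\|K_y\|_{\mathrm{op}}\le2\) is really used in the northern criterion \eqref{eq:northcriterion}, together with the choice of \(\delta\) in \eqref{eq:delta}.
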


\subsection{Positive connection metrics}

We first recall the result of He--Liu--Yau that supplies the southern filling.

\begin{proposition}[\protect{\cite[Proposition~3.1]{HLY}}]
\label{fact:prop31}
Let
\[
\pi\colon P\to(B,g_B)
\]
be a principal right \(S^3\)-bundle over a compact base, possibly with boundary, with
\[
\operatorname{sec}_{g_B}\ge\kappa>0.
\]
Let \(\omega\) be a connection with curvature \(\Omega\), and set
\[
M_0=\sup_B|\Omega|,
\qquad
M_1=\sup_B|D\Omega|,
\]
where \(D\Omega\) is defined using the Levi--Civita connection of \(g_B\) and the induced connection on the adjoint bundle. With the norms
\[
|\Omega|^2
=
\sum_{i<j,a}(\Omega_{ij}^a)^2,
\qquad
|D\Omega|^2
=
\sum_{k,i<j,a}((D_k\Omega)_{ij}^a)^2,
\]
set \(C=4\). Suppose that
\begin{equation}\label{eq:Lambda}
\Lambda
\ge
16CM_0^2
+
64C^2\kappa^{-1}(M_1+1)^2
+
4
\end{equation}
and that a smooth function \(\varphi\) on \(B\) satisfies
\[
-\operatorname{Hess}\varphi
\ge
\Lambda g_B.
\]
Then, for all sufficiently small \(\varepsilon>0\), the connection metric
\[
G_\varepsilon
=
\pi^*g_B+r_\varepsilon^2Q(\omega,\omega),
\qquad
r_\varepsilon^2
=
\varepsilon e^{\varepsilon\varphi},
\]
has strictly positive sectional curvature.
\end{proposition}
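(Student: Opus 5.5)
The plan is to compute the curvature of \(G_\varepsilon\) directly. I would view \(G_\varepsilon\) as a Riemannian submersion metric over \((B,g_B)\) whose totally geodesic-up-to-scaling fibres are round \(S^3\)'s of radius \(r=r_\varepsilon\), with \(r\) varying over the base. This is a warped version of a Kaluza--Klein metric.

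\textbf{Step 1: orthonormal frame and the three curvature blocks.} Fix horizontal lifts \(X_i\) of a \(g_B\)-orthonormal frame. Fix vertical fields \(U_a=r^{-1}\xi_a^*\), where \(\xi_a^*\) is generated by an orthonormal basis of \(\operatorname{Im}\mathbb H\). Using O'Neill's formulas, adapted to a non-constant fibre scale, I would compute the following.

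\begin{itemize}
\item The \(A\)-tensor is \(A_XY=-\tfrac12\Omega(X,Y)^*\), so its vertical length is of order \(r|\Omega|\).
\item The \(T\)-tensor of the fibres is umbilic, \(T_UV=-\langle U,V\rangle\nabla\log r\).
\item The fibres are intrinsically round of curvature \(r^{-2}\).
\end{itemize}

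This should give schematically
\begin{align*}
\mathcal K(X\wedge Y)&=\sec_B(X,Y)-\tfrac34 r^2|\Omega(X,Y)|^2,\\
\mathcal K(X\wedge U)&=-\frac{\operatorname{Hess}r(X,X)}{r}+\tfrac14 r^2|\Omega(X,\cdot)^{\top}U|^2,\\
\mathcal K(U\wedge V)&=r^{-2}-|\nabla\log r|^2.
\end{align*}
There are also mixed terms. Those of type \(\langle R(X,Y)Y,U\rangle\) involve \(r\,D\Omega\) and \(r\,\Omega\,d\log r\). Those of type \(\langle R(X,U)V,Y\rangle\) involve \(r^2\Omega\!\cdot\!\Omega\) and \(\Omega\) paired with the fibre bracket, which is of order \(\Omega\) times \(r\cdot r^{-1}\).

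\textbf{Step 2: insert \(r^2=\varepsilon e^{\varepsilon\varphi}\).} We have \(\log r=\tfrac12\log\varepsilon+\tfrac{\varepsilon}{2}\varphi\), so \(\nabla\log r=O(\varepsilon)\) and
\[
-\frac{\operatorname{Hess}r}{r}=-\tfrac{\varepsilon}{2}\operatorname{Hess}\varphi-\tfrac{\varepsilon^2}{4}d\varphi^2\ \ge\ \tfrac{\varepsilon}{2}\Lambda\,g_B-O(\varepsilon^2).
\]
Hence, for small \(\varepsilon\), the diagonal blocks have sizes \(\kappa-O(\varepsilon)\) (horizontal), \(\ge\varepsilon\Lambda/2\) (mixed) and \(\approx\varepsilon^{-1}\) (vertical).

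\textbf{Step 3: a general plane.} Take orthonormal \(X+U\) and \(Y+V\) and expand \(\mathcal K\) multilinearly. The curvature numerator becomes a quadratic form in the three quantities \(|X\wedge Y|\), \(|X\wedge V|+|U\wedge Y|\) and \(|U\wedge V|\). Its diagonal entries are bounded below by \(\kappa\), \(\varepsilon\Lambda/2\) and \(\varepsilon^{-1}/2\). Its off-diagonal entries are bounded by \(C r M_1+O(\varepsilon)\), by \(C r^2M_0^2\), and so on, with \(C=4\) counting the Lie algebra and index contributions. For positive definiteness it suffices to show that each cross term is absorbed by the adjacent diagonal terms via AM--GM. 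The horizontal--mixed cross term, of size \(\sqrt\varepsilon\,C(M_1+1)\), is absorbed when
\[
\varepsilon^2C^2(M_1+1)^2\le\tfrac{\kappa}{4}\cdot\tfrac{\varepsilon\Lambda}{16},
\]
which is where the term \(64C^2\kappa^{-1}(M_1+1)^2\) in \eqref{eq:Lambda} comes from. The mixed--vertical and \(\Omega\)-squared terms are absorbed when \(\Lambda\ge16CM_0^2\). The additive \(4\) in \eqref{eq:Lambda} covers the \(O(\varepsilon^2)\) gradient remainders.

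\textbf{Expected main obstacle.} The hard part is the bookkeeping in Step 3. All mixed terms, including those from \(T\) (the varying fibre size) and those involving \(D\Omega\), must be tracked with honest constants, so that each has a small power of \(\varepsilon\) matched against the correct diagonal block. In particular, every term pairing horizontal and vertical parts must carry at least a factor \(\sqrt\varepsilon\) relative to \(\sqrt{\kappa\cdot\varepsilon\Lambda}\).

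I would first carry out the computation at a point in frames adapted to the plane, which reduces the problem to a \(3\times3\) matrix estimate. Compactness of \(B\) makes \(M_0,M_1\) and \(\sup|d\varphi|\) finite, so that \(\varepsilon\) can be chosen uniformly.
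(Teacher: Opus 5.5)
The paper does not prove this statement. It imports \cite[Proposition~3.1]{HLY} and uses only its output: the explicit smallness conditions on \(\varepsilon\) and the block bound \(\min\{\kappa/2,\varepsilon\Lambda/8,1/(8\varepsilon)\}\). Your strategy is consistent with that bound: O'Neill's formulas for the umbilic-fibre connection metric, followed by a horizontal/mixed/vertical absorption with diagonal sizes \(\kappa\), \(\varepsilon\Lambda\), \(\varepsilon^{-1}\). Your Step~1 diagonal formulas and the Hessian computation in Step~2 are correct. As you set it up, however, Step~3 does not close.

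\textbf{The gap.} The problem is a term you list yourself under \(\langle R(X,U)V,Y\rangle\), namely \(\langle(\nabla_UA)_XY,V\rangle\).
\begin{itemize}
\item It comes from the fibrewise derivative of \(A_XY=-\tfrac12\Omega(X,Y)^{v}\), which is Killing along each fibre.
\item It is a multiple of \(Q(\Omega(X,Y),[\hat U,\hat V])\), where \(\hat U=r\,\omega(U)\).
\item So it has size of order \(M_0|X||Y||U||V|\), with no power of \(\varepsilon\) at all.
\end{itemize}
In the plane spanned by \(X+U\) and \(Y+V\) it enters through \(2R(X,V,Y,U)\). That is, it pairs \(X\otimes V\) with \(Y\otimes U\), so in your bookkeeping it sits inside the mixed diagonal entry. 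There it cannot be dominated by \(\varepsilon\Lambda/2\) as \(\varepsilon\to0\). By your own ``factor \(\sqrt\varepsilon\)'' criterion it would also be unabsorbable.

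\textbf{The fix.} The missing observation is that this term is skew in \((X,Y)\) and skew in \((U,V)\). On the bivector of an actual plane it therefore depends only on \(X\wedge Y\) and \(U\wedge V\). Equivalently, choose the orthonormal basis of the plane with \(X\perp Y\) and \(U\perp V\). Either way it is bounded by \(CM_0|X\wedge Y|\,|U\wedge V|\). It is thus a horizontal--vertical cross term, absorbed by \(\kappa\cdot\varepsilon^{-1}\). This costs only a smallness condition \(\varepsilon M_0^2\lesssim\kappa\), not a condition on \(\Lambda\).

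\textbf{Completing Step~3.} Once this is done, the mixed block is \((-\operatorname{Hess}r/r)\otimes g_V\) plus \(O(r^2M_0^2)\) terms, and the rest of your scheme goes through. One adjustment is needed: the mixed variable should be the \(H\otimes V\)-component \(|X\otimes V-Y\otimes U|\) of the unit bivector, not \(|X\wedge V|+|U\wedge Y|\). Then the three components are orthogonal and the Hessian lower bound applies to them.

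\textbf{A smaller slip.} The square of a cross term of size \(\sqrt\varepsilon\,C(M_1+1)\) is \(\varepsilon C^2(M_1+1)^2\), not \(\varepsilon^2C^2(M_1+1)^2\). Only the former produces the \(64C^2\kappa^{-1}(M_1+1)^2\) term in \eqref{eq:Lambda}.
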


The proposition is formulated for arbitrary base dimension. Its proof gives explicit smallness conditions on \(\varepsilon\), equations (3.18)--(3.20) of \cite{HLY}, and the uniform estimate
\[
\operatorname{sec}_{G_\varepsilon}
\ge
\min\left\{
\frac{\kappa}{2},
\frac{\varepsilon\Lambda}{8},
\frac{1}{8\varepsilon}
\right\}>0;
\]
see \cite[(3.22)]{HLY}.

\subsection{The southern filling}

The southern filling follows the construction of He--Liu--Yau \cite[\S4]{HLY}. We adapt it to the polar bundle \(P_\theta\), using the connection-metric proposition recalled above.

Fix
\[
\frac{\pi}{2}<a<b_0<\pi
\]
and choose a smooth function
\[
\chi\colon[a,\pi]\to[0,1]
\]
such that \(\chi=0\) near \(a\) and \(\chi=1\) on \([b_0,\pi]\). The southern base
\[
U_S=\{a\le t\le\pi\}\subset S^n
\]
carries the round metric
\[
g_B=dt^2+\sin^2t\,h_{S^{n-1}}
\]
with
\[
\operatorname{sec}_{g_B}=1.
\]

We first construct a connection on \(P_\theta|_{U_S}\). Regard \(\theta\) as a function of the angular variable \(x\) alone. In the northern and southern trivialisations define the local connection forms
\begin{equation}\label{eq:potentials}
A_N=\chi\,\theta^{-1}d\theta,
\qquad
A_S=(\chi-1)\,d\theta\,\theta^{-1}.
\end{equation}
In either trivialisation the corresponding principal connection is written
\[
\omega=u^{-1}Au+u^{-1}du.
\]

These local forms agree on the overlap. From
\[
u_S=\theta u_N
\]
one obtains
\[
u_S^{-1}du_S
=
u_N^{-1}\theta^{-1}d\theta\,u_N
+
u_N^{-1}du_N
\]
and
\[
u_S^{-1}A_Su_S
=
u_N^{-1}\theta^{-1}A_S\theta\,u_N.
\]
Thus the two expressions for \(\omega\) agree precisely when
\[
A_N
=
\theta^{-1}A_S\theta+\theta^{-1}d\theta.
\]
Since
\[
\theta^{-1}A_S\theta
=
(\chi-1)\theta^{-1}d\theta,
\]
this identity holds. Hence \(A_N\) and \(A_S\) define a global connection \(\omega\).

Near the boundary \(t=a\), one has \(A_N=0\), so \(\omega\) is the product connection in the northern trivialisation. Since \(\chi=1\) on \([b_0,\pi]\), the local connection form \(A_S\) vanishes on a neighbourhood of the south pole \(o_S\). Thus
\[
\omega=u^{-1}du
\]
there in the southern trivialisation, and the connection extends smoothly across \(o_S\).

The connection is invariant under the star action. The conjugation-equivariance of \(\theta\) implies
\[
\rho(q)^*(\theta^{-1}d\theta)
=
q(\theta^{-1}d\theta)q^{-1},
\]
and similarly
\[
\rho(q)^*(d\theta\,\theta^{-1})
=
q(d\theta\,\theta^{-1})q^{-1}.
\]
Since \(\rho\) preserves \(t\), it also preserves \(\chi(t)\). Hence the local connection forms transform by conjugation. Together with \(u\mapsto qu\), this gives
\[
(qu)^{-1}(qAq^{-1})(qu)+(qu)^{-1}d(qu)
=
u^{-1}Au+u^{-1}du.
\]
Thus \(\omega\) is star-invariant. The round metric on \(U_S\) is also star-invariant because \(\rho\) is orthogonal.

Let
\[
\vartheta=\theta^{-1}d\theta.
\]
In the northern trivialisation,
\[
A_N=\chi\vartheta,
\]
and the Maurer--Cartan equation
\[
d\vartheta+\vartheta\wedge\vartheta=0
\]
gives
\[
\Omega_N
=
d\chi\wedge\vartheta
+
\chi(\chi-1)\vartheta\wedge\vartheta.
\]
The connection \(\omega\) is smooth on the compact base \(U_S\). Consequently, its curvature and covariant derivative are bounded:
\[
M_0=\sup_{U_S}|\Omega|<\infty,
\qquad
M_1=\sup_{U_S}|D\Omega|<\infty.
\]

We now choose the function required by Proposition~\ref{fact:prop31}. Set
\[
\varphi(t)=-A_0\cos t.
\]
On the round sphere,
\[
\operatorname{Hess}(\cos t)=-\cos t\,g_B,
\]
and therefore
\[
-\operatorname{Hess}\varphi
=
-A_0\cos t\,g_B.
\]
Since \(t\ge a>\pi/2\),
\[
-\cos t\ge|\cos a|,
\]
so
\[
-\operatorname{Hess}\varphi
\ge
A_0|\cos a|\,g_B.
\]
Choose \(\Lambda\) satisfying \eqref{eq:Lambda} with \(\kappa=1\), and then choose \(A_0>0\) so that
\[
A_0|\cos a|\ge\Lambda.
\]
Proposition~\ref{fact:prop31} then gives \(\varepsilon_S>0\) such that, for
\[
0<\varepsilon<\varepsilon_S,
\]
the connection metric
\begin{equation}\label{eq:south}
G_S
=
dt^2+\sin^2t\,h_{S^{n-1}}
+r_S(t)^2Q(\omega,\omega),
\qquad
r_S^2
=
\varepsilon e^{-\varepsilon A_0\cos t},
\end{equation}
has strictly positive sectional curvature.

The metric extends smoothly across \(o_S\). With
\[
\tau=\pi-t,
\]
one has
\[
r_S(\pi-\tau)
=
\sqrt{\varepsilon}\,
e^{\varepsilon A_0\cos\tau/2},
\]
which is a smooth even function of \(\tau\). Since \(A_S=0\) near \(\tau=0\), the connection is the product connection there. Hence \(G_S\) extends smoothly across the south pole. The star action is free and isometric, so the quotient map
\[
(P_S,G_S)\longrightarrow(D_S,g_S)
\]
is a Riemannian submersion. O'Neill's formula therefore gives
\[
\operatorname{sec}_{g_S}>0
\]
throughout \(D_S\), including its centre and boundary.

Near \(t=a\), the connection is the product connection in the northern trivialisation, so the induced metric on the source boundary \(\partial P_S\) is
\[
F_a^2h_{S^{n-1}}+r_a^2h_{S^3},
\]
where
\begin{equation}\label{eq:bdata}
F_a=\sin a,
\qquad
r_a^2=\varepsilon e^{\varepsilon A_0|\cos a|},
\qquad
q_s=\frac12\varepsilon A_0F_a,
\qquad
\mu_S=\cot a<0.
\end{equation}
Here
\[
q_s=\frac{r_S'(a)}{r_a},
\qquad
\mu_S=\frac{(\sin t)'|_{t=a}}{\sin a}.
\]
We write \(X\) and \(U\) for the angular and fibre components, respectively, measured in the corresponding scaled orthonormal frames. Since the outward unit normal to \(U_S\) along \(t=a\) is \(-\partial_t\), the second fundamental form of the source boundary is
\[
\mathcal B_S(X+U,X+U)
=
-\mu_S|X|^2-q_s|U|^2.
\]
These are the boundary data to be matched by the northern filling.

\subsection{The northern filling}

The northern filling follows the construction of He--Liu--Yau \cite[\S5]{HLY}. Their choice of warping functions will be used unchanged. We show that the curvature argument extends from their particular \(S^3\)-representation to the general polar action under the hypothesis
\[
\|K_y\|_{\mathrm{op}}\le2.
\]

In the northern trivialisation,
\[
P_N\cong \mathbb D^n\times S^3,
\]
and the connection is the product connection \(u^{-1}du\). It is invariant under the star action since
\[
(qu)^{-1}d(qu)=u^{-1}du.
\]
On \(P_N\) consider the \(S^3\)-invariant metric
\begin{equation}\label{eq:north}
G_N
=
ds^2+F(s)^2h_{S^{n-1}}+r(s)^2h_{S^3},
\qquad
0\le s\le\ell_N.
\end{equation}
The underlying disk is identified with the original northern polar disk by the radial diffeomorphism
\[
t=\frac{a}{\ell_N}s,
\]
which preserves the angular variable \(x\).

\paragraph{The star-horizontal distribution.}
We use scaled orthonormal components for the two round factors in \eqref{eq:north}. At a point with fibre coordinate \(u=1\), the infinitesimal star vector generated by
\[
\xi\in\operatorname{Im}\mathbb H
\]
has angular and fibre components
\[
(FK_y\xi,r\xi).
\]
A vector
\[
\lambda\partial_s+X+U
\]
is therefore star-horizontal precisely when
\[
\langle X,FK_y\xi\rangle+\langle U,r\xi\rangle=0
\qquad
\text{for every }\xi\in\operatorname{Im}\mathbb H.
\]
Equivalently,
\[
U=TX,
\qquad
T=-\frac{F}{r}K_y^*.
\]
Since
\[
\|K_y^*\|_{\mathrm{op}}
=
\|K_y\|_{\mathrm{op}}
\le2,
\]
we obtain
\begin{equation}\label{eq:Tbound}
\|T\|_{\mathrm{op}}\le\frac{2F}{r}.
\end{equation}
This is the representation estimate used in the northern curvature calculation.

Consider two star-horizontal vectors
\[
\lambda\partial_s+X+U,
\qquad
\mu\partial_s+Y+V,
\]
where
\[
U=TX,\qquad V=TY.
\]
Then \eqref{eq:Tbound} gives
\begin{equation}\label{eq:graphestimates}
|\lambda V-\mu U|^2
\le
\frac{4F^2}{r^2}
|\lambda Y-\mu X|^2
\end{equation}
and
\begin{equation}\label{eq:areaestimate}
|X\otimes V-Y\otimes U|^2
\le
\frac{8F^2}{r^2}|X\wedge Y|^2.
\end{equation}
The first inequality follows immediately from
\[
\lambda V-\mu U
=
T(\lambda Y-\mu X).
\]
For the second, if \(X\) and \(Y\) are independent, make a determinant-one change of the pair so that they are orthogonal. Both alternating expressions in \eqref{eq:areaestimate} are unchanged, while
\[
|X\otimes TY-Y\otimes TX|^2
=
|X|^2|TY|^2+|Y|^2|TX|^2.
\]
The estimate then follows from \eqref{eq:Tbound}. If \(X\) and \(Y\) are dependent, both sides of \eqref{eq:areaestimate} vanish. These are the analogues of \cite[(5.4)--(5.6)]{HLY}. Principal right translation gives the same graph description at every point of the fibre.

\paragraph{Curvature of star-horizontal planes.}
For two vectors as above, the sectional-curvature numerator of the doubly warped metric \eqref{eq:north} is
\begin{align}\label{eq:fullcurvature}
\mathcal K={}&
-\frac{F''}{F}|\lambda Y-\mu X|^2
-\frac{r''}{r}|\lambda V-\mu U|^2
\notag\\
&+
\frac{1-F'^2}{F^2}|X\wedge Y|^2
+
\frac{1-r'^2}{r^2}|U\wedge V|^2
\notag\\
&-
\frac{F'r'}{Fr}
|X\otimes V-Y\otimes U|^2.
\end{align}
This is the dimension-independent form of \cite[(5.7)]{HLY}. It follows from the standard curvature formulas for a doubly warped product. In radially parallel scaled orthonormal frames \(E_i\) and \(E_a\) on the two round factors,
\[
\nabla_{E_i}\partial_s=\frac{F'}F E_i,
\qquad
\nabla_{E_a}\partial_s=\frac{r'}r E_a,
\qquad
\nabla_{E_i}E_a=\nabla_{E_a}E_i=0,
\]
and
\[
(\nabla_{E_i}E_j)^{\mathrm{rad}}
=
-\frac{F'}F\delta_{ij}\partial_s,
\qquad
(\nabla_{E_a}E_b)^{\mathrm{rad}}
=
-\frac{r'}r\delta_{ab}\partial_s.
\]
Together with the intrinsic curvature one of the round factors, these identities give \eqref{eq:fullcurvature}.

Suppose now that
\[
F,r>0,\qquad
F',r'\ge0,\qquad
r'<1,
\]
and
\begin{equation}\label{eq:northcriterion}
\frac{1-F'^2}{F^2}
>
\frac{8FF'r'}{r^3},
\qquad
-\frac{F''}{F}
>
\frac{4F^2(r'')_+}{r^3}.
\end{equation}
Using \eqref{eq:graphestimates} and \eqref{eq:areaestimate} in \eqref{eq:fullcurvature}, the coefficient of
\[
|\lambda Y-\mu X|^2
\]
is strictly positive, as is the coefficient of
\[
|X\wedge Y|^2,
\]
while the coefficient of the vertical-area term
\[
|U\wedge V|^2
\]
is nonnegative. If the first two quantities vanished simultaneously, the projections
\[
\lambda\partial_s+X,
\qquad
\mu\partial_s+Y
\]
would be linearly dependent. Since the star-horizontal space is a graph over the radial-angular tangent space, the original horizontal vectors would then be dependent. Hence every star-horizontal two-plane has positive sectional-curvature numerator. This extends \cite[Lemma~5.2]{HLY} to the present polar action.

\paragraph{Choice of the warping functions.}
We now use the profiles of \cite[\S5.2]{HLY}. Let
\[
\eta:\mathbb R\to[0,1]
\]
be smooth and nondecreasing, with
\[
\eta=0
\quad\text{on }\left(-\infty,\frac14\right],
\qquad
\eta=1
\quad\text{on }\left[\frac12,\infty\right),
\]
and set
\[
C_\eta=\sup|\eta'|.
\]
Choose
\begin{equation}\label{eq:delta}
0<\delta\le\frac{F_a}{2},
\qquad
\delta^3
\le
\frac{1}{128A_0F_a(1+C_\eta)}.
\end{equation}

Define \(F\) by
\[
F'
=
e^{-F^2/(2\delta^2)},
\qquad
F(0)=0,
\]
and choose \(\ell_N\) so that
\[
F(\ell_N)=F_a.
\]
Equivalently,
\[
\ell_N
=
\int_0^{F_a}
e^{z^2/(2\delta^2)}\,dz.
\]
Set
\[
d=r_aq_s
\]
and define
\[
r(s)
=
r_a
-
d\int_s^{\ell_N}
\eta(v/\delta)\,dv.
\]
For sufficiently small \(\varepsilon>0\), we may require
\[
q_s\ell_N\le\frac12,
\qquad
d<1.
\]
Indeed,
\[
q_s=O(\varepsilon),
\qquad
d=O(\varepsilon^{3/2}),
\]
while \(\ell_N\) is independent of \(\varepsilon\) once \(\delta\) has been fixed. Hence these inequalities hold for
\[
0<\varepsilon<\varepsilon_N
\]
for some \(\varepsilon_N>0\).

Since
\[
\ell_N\ge F_a\ge2\delta,
\]
one has \(\eta=1\) near \(s=\ell_N\). Moreover,
\[
\frac{r_a}{2}\le r\le r_a,
\qquad
r'=d\,\eta(s/\delta),
\qquad
r''=\frac d\delta\,\eta'(s/\delta),
\]
and
\[
r(\ell_N)=r_a,
\qquad
\frac{r'(\ell_N)}{r_a}=q_s.
\]
The estimate \(r\ge r_a/2\) gives
\[
\frac d{r^3}
\le
\frac{8q_s}{r_a^2}
=
4A_0F_a e^{-\varepsilon A_0|\cos a|}
\le
4A_0F_a.
\]

The defining equation for \(F\) gives
\[
-\frac{F''}{F}
=
\delta^{-2}e^{-F^2/\delta^2},
\qquad
\frac{1-F'^2}{F^2}
=
\frac{1-e^{-F^2/\delta^2}}{F^2}.
\]
On the support of \(r''\),
\[
s\le\frac\delta2
\]
and \(F\le s\), so
\[
-\frac{F''}{F}
\ge
e^{-1/4}\delta^{-2}.
\]
Also,
\[
\frac{4F^2r''}{r^3}
\le
4A_0F_aC_\eta\,\delta
\le
\frac{C_\eta}{32(1+C_\eta)}\delta^{-2}.
\]
This proves the second inequality in \eqref{eq:northcriterion}; away from the support of \(r''\), its right-hand side vanishes.

For \(x>0\), the power series for \(\sinh\) gives
\[
\frac{2\sinh(x^2/2)}{x^3}
\ge
\frac1x+\frac{x^3}{24}
\ge
\frac{4}{3\,8^{1/4}}
>
\frac12.
\]
Substituting
\[
x=\frac F\delta
\]
yields
\[
\frac{1-F'^2}{F^2}
>
\frac{FF'}{2\delta^3}
>
32A_0F_aFF'
\ge
\frac{8FF'r'}{r^3}.
\]
Thus the first inequality in \eqref{eq:northcriterion} also holds, and
\[
r'\le d<1.
\]
The metric therefore has positive sectional-curvature numerator on every star-horizontal two-plane away from the centre.

\paragraph{Smoothness at the centre and passage to the quotient.}
The function
\[
F\longmapsto
\int_0^F e^{z^2/(2\delta^2)}\,dz
\]
is smooth and odd with derivative one at the origin. Its inverse is therefore smooth and odd, with expansion
\[
F(s)
=
s-\frac{s^3}{6\delta^2}
+O(s^5).
\]
Moreover, \(r\) is constant near \(s=0\), since \(\eta(s/\delta)=0\) there. These are the standard smoothness conditions for the doubly warped metric \eqref{eq:north} at the centre.

At the centre the base point is fixed by \(\rho\), so the star orbit is the entire \(S^3\)-fibre and the star-horizontal tangent space is the base tangent space. The limiting sectional curvature of the base factor is
\[
\delta^{-2}>0.
\]
Thus the star-horizontal two-planes are positively curved at the centre as well.

At \(u=1\), the Gram matrix of the infinitesimal star vectors is
\[
F^2K_y^*K_y+r^2\operatorname{Id}
\ge
r^2\operatorname{Id}.
\]
Hence the star orbits are nonsingular even at points where the rank of \(K_y\) drops. The star action is free and isometric, and O'Neill's formula gives a metric \(g_N\) on
\[
D_N=P_N/S^3_\star
\]
with
\[
\operatorname{sec}_{g_N}>0
\]
throughout the disk.

At the boundary \(s=\ell_N\),
\[
F(\ell_N)=F_a,
\qquad
r(\ell_N)=r_a,
\qquad
\frac{r'(\ell_N)}{r_a}=q_s,
\qquad
\mu_N:=\frac{F'(\ell_N)}{F_a}>0.
\]
With \(X\) and \(U\) denoting the angular and fibre components in scaled orthonormal frames, the outward unit normal is \(\partial_s\), and the second fundamental form of the source boundary is
\[
\mathcal B_N(X+U,X+U)
=
\mu_N|X|^2+q_s|U|^2.
\]
Thus the source boundary metric is
\[
F_a^2h_{S^{n-1}}+r_a^2h_{S^3},
\]
the same metric obtained from the southern filling.

\subsection{Boundary compatibility}

The boundary comparison follows the calculation of He--Liu--Yau \cite[\S6]{HLY}. In the common northern trivialisation, the source boundary metrics of the two fillings are both
\[
F_a^2h_{S^{n-1}}+r_a^2h_{S^3},
\]
and the star actions are identical. Their quotient boundary metrics therefore agree under the identification defined by this common trivialisation. By Lemma~\ref{lem:attaching}, this identification is the map
\[
\sigma\colon\partial D_N\longrightarrow\partial D_S
\]
in the disk coordinates extending over \(D_N\) and \(D_S\). Hence
\[
h_N=\sigma^*h_S=:h.
\]

The quotient metric can also be written explicitly. In the common boundary marking, the quotient map is
\[
\Psi_\partial(x,u)=\rho(u)^{-1}x.
\]
At \(u=1\), write \(X\in T_yS(V)\) and \(U\in\operatorname{Im}\mathbb H\) for angular and fibre components in scaled orthonormal frames. Its differential is
\[
L(X,U)
=
F_a^{-1}X-r_a^{-1}K_yU.
\]
Since the source metric is Euclidean in these scaled components, the quotient cometric is
\[
h^{-1}
=
LL^*
=
F_a^{-2}h_{S^{n-1}}^{-1}
+
r_a^{-2}K_yK_y^*.
\]

Let \(B_N\) and \(B_S\) denote the outward second fundamental forms of the quotient boundaries. The outward unit normals to the source boundaries are
\[
\partial_s
\qquad\text{and}\qquad
-\partial_t
\]
on the northern and southern sides, respectively. Both are star-horizontal and project to the corresponding quotient normals. For a quotient boundary vector \(Y\), let
\[
X+U
\]
be its common star-horizontal lift in the boundary source metric. The second fundamental form of a quotient boundary is the source second fundamental form restricted to star-horizontal lifts. Therefore
\[
B_N(Y,Y)
=
\mu_N|X|^2+q_s|U|^2,
\]
while
\[
(\sigma^*B_S)(Y,Y)
=
-\mu_S|X|^2-q_s|U|^2.
\]
The fibre terms cancel, giving
\begin{equation}\label{eq:shapesum}
(B_N+\sigma^*B_S)(Y,Y)
=
(\mu_N-\mu_S)|X|^2.
\end{equation}
Since
\[
\mu_N>0>\mu_S,
\]
this is positive for every nonzero \(Y\). Indeed, the boundary star-horizontal space is the graph
\[
U=-\frac{F_a}{r_a}K_y^*X,
\]
so \(X=0\) would imply \(U=0\) and hence \(Y=0\).

For the quantitative bound required in the gluing argument, let \(b_j\) be the eigenvalues of
\[
K_yK_y^*.
\]
The hypothesis
\[
\|K_y\|_{\mathrm{op}}\le2
\]
gives
\[
0\le b_j\le4.
\]
If \(X\) is an eigenvector corresponding to \(b_j\), then
\[
|Y|_h^2
=
|X|^2+|U|^2
=
\left(
1+\frac{F_a^2b_j}{r_a^2}
\right)|X|^2.
\]
It follows from \eqref{eq:shapesum} that the corresponding eigenvalue of
\[
B_N+\sigma^*B_S
\]
relative to \(h\) is
\[
\frac{r_a^2}{r_a^2+F_a^2b_j}
(\mu_N-\mu_S).
\]
Since \(b_j\le4\),
\[
B_N+\sigma^*B_S
\ge
c_Bh,
\]
where
\[
c_B
=
\frac{r_a^2}{r_a^2+4F_a^2}
(\mu_N-\mu_S)
>0.
\]
Thus
\begin{equation}\label{eq:compat}
h_N=\sigma^*h_S,
\qquad
B_N+\sigma^*B_S\ge c_Bh.
\end{equation}

\subsection{Gluing}

We use the positive-sectional-curvature case of the Reiser--Wraith gluing theorem.

\begin{theorem}[\protect{Reiser--Wraith \cite[Theorem~A(i)]{RW}}]
\label{prop:gluing}
Let \((M_N,g_N)\) and \((M_S,g_S)\) be smooth Riemannian manifolds of the same dimension with compact boundary and strictly positive sectional curvature. Suppose that
\[
\phi\colon\partial M_N\longrightarrow\partial M_S
\]
is an isometry of the induced boundary metrics and that
\[
B_N+\phi^*B_S\ge0,
\]
where the second fundamental forms are taken with respect to the outward unit normals. Then
\[
M_N\cup_\phi M_S
\]
admits a smooth Riemannian metric with strictly positive sectional curvature. The metric may be chosen to agree with \(g_N\) and \(g_S\) outside an arbitrarily small neighbourhood of the gluing hypersurface.
\end{theorem}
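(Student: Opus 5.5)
The statement is the Reiser--Wraith gluing theorem \cite[Theorem~A(i)]{RW}, which the paper takes from the literature. If I were to prove it directly, I would reduce it to a smoothing problem for a $C^0$ metric across a hypersurface with a kink of the right sign. The steps are: set up normal collars, reduce to strict boundary convexity, mollify in the normal direction, and check the curvature of the smoothed metric.

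\emph{Step 1: normal collars.} Write each metric near its boundary in Gaussian normal coordinates for the inward distance $t$:
\[
g_N = dt^2 + g^N_t,\qquad g_S = dt^2 + g^S_t,\qquad 0\le t<\epsilon_0.
\]
Identify the boundaries by $\phi$, so that $g^N_0=\phi^*g^S_0=:h$. On $\Sigma\times(-\epsilon_0,\epsilon_0)$ define
\[
\bar g = dt^2+\bar g_t,\qquad \bar g_t = g^N_{-t}\ (t\le 0),\qquad \bar g_t=g^S_{t}\ (t\ge0).
\]
This metric is smooth on either side and Lipschitz across $t=0$. With the outward-normal convention we have $\partial_t g^\alpha_t|_{t=0}=-2B_\alpha$. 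Hence the one-sided derivatives satisfy
\[
\partial_t\bar g_t|_{0^+}-\partial_t\bar g_t|_{0^-}=-2(B_N+\phi^*B_S)\le 0 .
\]
So $\bar g_t$ is ``concave'' across the seam.

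\emph{Step 2: reduction to strict inequality.} I would first arrange $B_N+\phi^*B_S\ge c\,h$ with $c>0$. In this paper this is already supplied by \eqref{eq:compat}, so for the application it comes for free. In general I would try a conformal change $e^{2f}g_\alpha$ supported in a thin collar. Take $f=0$ on the boundary, $\partial_\nu f$ of the sign that increases $B_\alpha$ by a small multiple of $h$, and $f$ small in $C^1$. The curvature change is dominated by $-\operatorname{Hess} f$ in the normal direction. Choosing $f(t)=-\beta t+\beta t^2/(2\tau)$, cut off at $t=\tau$, makes $f''>0$ in the normal direction but of size $\beta/\tau$. Keeping this bounded by the positive curvature margin requires $\beta\lesssim\tau\min\sec$, which is compatible with any desired small $\beta$. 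This step is delicate, and I expect it to be the main obstacle in the non-strict case. The Hessian term has the wrong sign for one radial plane, and one has to balance it against $\min\sec_{g_\alpha}>0$ on a compact collar.

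\emph{Step 3: mollification.} Replace $\bar g_t$ by $\tilde g_t=(\bar g\ast\psi_\delta)_t$, mollified in $t$ only, and interpolate back to $\bar g$ outside $|t|<2\delta$. Then $\tilde g\to\bar g$ in $C^1$ away from the seam and uniformly in $C^0$ everywhere. Tangential derivatives of $\tilde g$ stay bounded, and $\partial_t\tilde g$ stays bounded. Meanwhile $\partial_t^2\tilde g_t=\bar g''\ast\psi_\delta+[\partial_t\bar g]\,\psi_\delta(t)$, which is bounded plus $-2(B_N+\phi^*B_S)\psi_\delta$.

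\emph{Step 4: curvature check.} Use the Gauss--Codazzi--Riccati decomposition for $dt^2+\tilde g_t$:
\[
R(\partial_t,X,X,\partial_t)=-\tfrac12\tilde g''(X,X)+\tfrac14|\tilde g'(X,\cdot)|^2_{\tilde g}.
\]
Tangential and mixed curvatures involve only $\tilde g$, $\tilde g'$ and tangential derivatives, so they converge uniformly to those of $g_N,g_S$. The only singular contribution is the radial one, and it equals $(B_N+\phi^*B_S)(X,X)\psi_\delta(t)\ge c|X|^2\psi_\delta$. This is large and positive.

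For a general plane spanned by $\partial_t\cos\alpha+X\sin\alpha$ and a tangential unit vector $Y$, the curvature has the form
\[
\cos^2\!\alpha\,(\text{radial})+\sin^2\!\alpha\,(\text{tangential})+2\sin\alpha\cos\alpha\,(\text{mixed}).
\]
The radial term is at least (bounded positive) plus $c\psi_\delta$, the tangential term is bounded below by roughly $\min\sec$, and the mixed term is bounded. A quadratic-form (AM--GM) argument then gives positivity once $\delta$ is small. Finally, one observes that the new metric agrees with $g_N,g_S$ outside $|t|<2\delta$, which gives the localisation clause of the theorem.
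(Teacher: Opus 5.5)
The paper does not prove this statement; it quotes it from Reiser--Wraith. Your sketch therefore has to stand on its own, and it breaks down in Step~4.

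\textbf{Tangential curvature in the smoothing layer.} You claim the tangential and mixed curvatures ``converge uniformly to those of $g_N,g_S$''. This is false, because these curvatures are built from $\tilde g'$, which, as you note in Step~3, only stays bounded. For $|t|<\delta$ one has
\[
\tilde g'_t=2(B_N-\lambda C)+O(\delta),
\qquad
C=B_N+\phi^*B_S,
\qquad
\lambda(t)=\int_{-\delta}^{t}\psi_\delta,
\]
where $\lambda(t)$ is the mass of the mollifier on the southern side. Take a plane spanned by $h$-orthonormal $X,Y$ tangent to the level sets. By the Gauss equation its curvature is $\operatorname{sec}_h(X,Y)-\det_{X,Y}(B_N-\lambda C)+O(\delta)$, where $\det_{X,Y}A=A(X,X)A(Y,Y)-A(X,Y)^2$. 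This is the curvature of neither $g_N$ nor $g_S$. For such planes ($\alpha=\pi/2$) the kink term contributes nothing, so neither shrinking $\delta$ nor your AM--GM step can help.

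A separate argument is needed, and this is where the hypothesis is used beyond the sign of the kink. The $\lambda^2$-coefficient of $\det_{X,Y}(B_N-\lambda C)$ is $\det_{X,Y}C\ge0$, because $C\ge0$. So $\lambda\mapsto\det_{X,Y}(B_N-\lambda C)$ is convex. Hence the tangential curvature is at least $(1-\lambda)\operatorname{sec}_{g_N}+\lambda\operatorname{sec}_{g_S}-O(\delta)$. For indefinite $C$ the intermediate value can be worse than both endpoints, so this step genuinely depends on the hypothesis.

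\textbf{Radial term.} Its nonsingular part is not ``bounded positive''. Because $\frac14|\tilde g'(X,\cdot)|^2$ is quadratic in $\tilde g'$, this part equals the average of the one-sided radial curvatures minus a variance term $\lambda(1-\lambda)|C(X,\cdot)|^2$. That term is of order one and need not be small compared with $\min\operatorname{sec}$. Using $|C(X,\cdot)|^2\le\|C\|\,C(X,X)$, it is absorbed by $C(X,X)\psi_\delta(t)$ only if $\min(\lambda,1-\lambda)\lesssim\delta\psi_\delta(t)$. This holds for a symmetric mollifier that is nonincreasing on $[0,\delta]$. It fails for a general mollifier wherever $\psi_\delta$ is small while $0<\lambda<1$.

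With these two corrections, and since the Codazzi terms are linear in $\tilde g'$, every plane in the layer has curvature at least $(1-\lambda)\operatorname{sec}_{g_N}+\lambda\operatorname{sec}_{g_S}-O(\delta)$. This also makes your Step~2 unnecessary.

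\textbf{Interpolation back to $\bar g$.} Doing this on $\delta<|t|<2\delta$ introduces an error of order one in $\partial_t^2$. The cutoff's second derivative is of order $\delta^{-2}$, while $\tilde g-\bar g$ is of order $\delta^2$ there. No kink term is available in that region to compensate, so the cutoff must act on a scale much larger than $\delta$.
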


For \(k=1\), the curvature condition \(Ric_k>0\) in \cite[Theorem~A(i)]{RW} is positive sectional curvature. The second fundamental form convention in \cite{RW} agrees with ours: Reiser--Wraith use the inward normal with the opposite sign, so that the boundary of a round ball is positive definite.

Applying Theorem~\ref{prop:gluing} with
\[
\phi=\sigma
\]
and using \eqref{eq:compat} gives a smooth metric of strictly positive sectional curvature on
\[
D_N\cup_\sigma D_S
\cong
P_\theta/S^3_\star.
\]

\subsection{Proof of Theorem~\ref{thm:HLYgeneral}}
The preceding subsections construct the southern and northern metrics, establish their positive sectional curvature, and verify the boundary conditions required by the Reiser--Wraith gluing theorem. To complete the proof, it remains to choose the parameters in a compatible order and apply the gluing theorem to the two quotient disks.
\begin{proof}
Choose the parameters in the following order. First fix
\[
\frac{\pi}{2}<a<b_0<\pi
\]
and the cutoff function \(\chi\). For the fixed polar bundle \(P_\theta\), the resulting connection on the southern piece has finite curvature bounds \(M_0\) and \(M_1\). Choose \(\Lambda\) satisfying \eqref{eq:Lambda}, and then choose \(A_0>0\) so that
\[
A_0|\cos a|\ge\Lambda.
\]
Next choose the cutoff \(\eta\) and \(\delta>0\) satisfying \eqref{eq:delta}. This determines the northern warping function \(F\) and the length
\[
\ell_N
=
\int_0^{F_a}e^{z^2/(2\delta^2)}\,dz.
\]

All of these choices are independent of \(\varepsilon\). The southern construction gives \(\varepsilon_S>0\) such that \(g_S\) has strictly positive sectional curvature whenever
\[
0<\varepsilon<\varepsilon_S.
\]
On the northern side,
\[
q_s=\frac12\varepsilon A_0F_a
\]
and
\[
d=r_aq_s
=
O(\varepsilon^{3/2})
\qquad
(\varepsilon\to0).
\]
Since \(\ell_N\) is fixed, the conditions
\[
q_s\ell_N\le\frac12,
\qquad
d<1
\]
hold for all sufficiently small \(\varepsilon>0\). Thus there exists \(\varepsilon_N>0\) such that the northern construction gives a metric \(g_N\) of strictly positive sectional curvature for
\[
0<\varepsilon<\varepsilon_N.
\]

Choose
\[
0<\varepsilon<\min\{\varepsilon_S,\varepsilon_N\}.
\]
Then \((D_N,g_N)\) and \((D_S,g_S)\) have strictly positive sectional curvature, including along their boundaries. By \eqref{eq:compat},
\[
h_N=\sigma^*h_S,
\qquad
B_N+\sigma^*B_S\ge c_Bh>0.
\]
The Reiser--Wraith gluing theorem, Theorem~\ref{prop:gluing}, therefore gives a smooth metric of strictly positive sectional curvature on
\[
D_N\cup_\sigma D_S.
\]
By Lemma~\ref{lem:attaching},
\[
D_N\cup_\sigma D_S
\cong
P_\theta/S^3_\star.
\]
Hence \(P_\theta/S^3_\star\) admits a smooth Riemannian metric with strictly positive sectional curvature.
\end{proof}

\begin{remark}[The representation bound can be removed]
\label{rem:general_bound}
The hypothesis
\[
\|K_y\|_{\mathrm{op}}\le2
\]
in Theorem~\ref{thm:HLYgeneral} is a hypothesis of the particular He--Liu--Yau construction used in this paper. It is not necessary for the existence of a positively curved metric on a star quotient.

Indeed, let \(E\to S(\mathbb Re\oplus V)\) be any star bundle, with \(V\) an arbitrary finite-dimensional orthogonal \(S^3\)-representation. By Proposition~\ref{prop:polar}, \(E\) is equivariantly isomorphic to a polar bundle \(P_\theta\) for a smooth conjugation-equivariant map
\[
\theta:S(V)\longrightarrow S^3.
\]
By Lemma~\ref{lem:attaching},
\[
E/S^3_\star
\cong
D(V)_N\cup_{\sigma_{\rho,\theta}}D(V)_S,
\qquad
\sigma_{\rho,\theta}(x)=\rho(\theta(x))^{-1}x.
\]
Set \(\beta=\theta^{-1}\). Then \(\beta\) is again conjugation-equivariant and
\[
\sigma_{\rho,\theta}(x)
=
\rho(\beta(x))x
=
J_\beta(x).
\]
Deng--Hu--Zhang \cite[Proposition~5.1]{DHZ}, together with the Reiser--Wraith gluing theorem \cite[Theorem~A(i)]{RW}, therefore implies that \(E/S^3_\star\) admits a smooth metric of positive sectional curvature, with no bound on the infinitesimal action fields.
\end{remark}

\section{Homotopy spheres from star bundles}\label{sec:speranca}

We now apply the preceding results to the special \(S^3\)-\(S^3\) bundles constructed by Sperança \cite{Sp}. His examples are obtained by equivariant pullback of the Gromoll--Meyer bundle
\[
\pi_{GM}\colon Sp(2)\longrightarrow S^7
\]
and carry a second free \(S^3\)-action commuting with the principal action. We will show that these are star bundles whose infinitesimal base actions satisfy
\[
\|K_y\|_{\mathrm{op}}\le2.
\]
Proposition~\ref{prop:polar} then puts them in polar form, and Theorem~\ref{thm:HLYgeneral} applies.

We use right principal actions, as in the preceding sections. Sperança writes the principal action on the left; we pass to the equivalent right action
\[
ph=h^{-1}\mathbin\bullet p.
\]

\subsection{The two star bundles}

We first identify the two bundles that will enter the curvature construction.

\subsubsection*{Dimension eight.}
Sperança obtains a special \(S^3\)-\(S^3\) bundle
\[
\pi_{11}\colon E^{11}\longrightarrow S^8
\]
by an equivariant pullback of the Gromoll--Meyer bundle; see
\cite[Remark~2.1 and (2.5)--(2.7)]{Sp}. Using coordinates
\[
S^8=S(\mathbb R\oplus\mathbb H\oplus\mathbb H),
\qquad
(\lambda,x,w)\in S^8,
\]
the induced action on the base is
\begin{equation}\label{eq:rho8}
\rho_8(q)(\lambda,x,w)
=
(\lambda,qx,qwq^{-1}).
\end{equation}
The vector
\[
e_8=(1,0,0)
\]
is fixed by this action, and its orthogonal complement is
\[
V_8=\mathbb H\oplus\mathbb H.
\]
Thus
\[
\mathbb R\oplus\mathbb H\oplus\mathbb H
=
\mathbb Re_8\oplus V_8,
\]
with \(\rho_8\) orthogonal and fixing the distinguished axis \(\mathbb Re_8\). Together with Sperança's special \(S^3\)-\(S^3\) structure, this makes
\[
E^{11}\longrightarrow S^8
\]
a star bundle.

\subsubsection*{Dimension ten.}
The ten-dimensional construction is similar. Sperança defines a special \(S^3\)-\(S^3\) bundle
\[
\pi_{13}\colon E^{13}\longrightarrow S^{10}
\]
by another equivariant pullback of the Gromoll--Meyer bundle; see
\cite[(2.8)--(2.10)]{Sp}. In the presentation
\[
S^{10}
=
S(\operatorname{Im}\mathbb H\oplus\mathbb H\oplus\mathbb H),
\qquad
(p,w,x)\in S^{10},
\]
the base action is
\begin{equation}\label{eq:rho10}
\rho_{10}(q)(p,w,x)
=
(p,qw,qxq^{-1}).
\end{equation}
Here the fixed vector
\[
e_{10}=(0,0,1)
\]
gives the orthogonal decomposition
\[
\operatorname{Im}\mathbb H\oplus\mathbb H\oplus\mathbb H
=
\mathbb Re_{10}\oplus V_{10},
\]
where
\[
V_{10}
=
\operatorname{Im}\mathbb H
\oplus\mathbb H
\oplus\operatorname{Im}\mathbb H.
\]
Again the representation is orthogonal and fixes the distinguished axis, so
\[
E^{13}\longrightarrow S^{10}
\]
is a star bundle.

Sperança determines the diffeomorphism types of the resulting star quotients.

\begin{theorem}[\protect{Sperança \cite[Theorem~1]{Sp}}]
\label{thm:speranca}
The quotient
\[
E^{11}/S^3_\star
\]
is the exotic \(8\)-sphere. The quotient
\[
E^{13}/S^3_\star
\]
represents a generator of the order-three subgroup of \(\Theta_{10}\) consisting of homotopy \(10\)-spheres that bound spin manifolds.
\end{theorem}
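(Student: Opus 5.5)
This statement is quoted from Sperança \cite[Theorem~1]{Sp}, so we would not reprove it. What we need to check is that the objects here match his: our \(E^{11}\to S^8\) and \(E^{13}\to S^{10}\), written with right principal actions and star actions, must be his special \(S^3\)-\(S^3\) bundles with the same star quotients. The plan has three steps.

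\emph{Step 1: translate conventions.} Recall Sperança's pullback description of \(E^{11}\) and \(E^{13}\) from the Gromoll--Meyer bundle \(Sp(2)\to S^7\) \cite[(2.5)--(2.10)]{Sp}. Pass to right actions via \(ph=h^{-1}\bullet p\). Check that the second free \(S^3\)-action commutes with this right action and covers \(\rho_8\) and \(\rho_{10}\) as written in \eqref{eq:rho8} and \eqref{eq:rho10}.

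Changing a left action to a right one by inversion does not change the orbits. So the star orbit space is literally Sperança's quotient, and his identification applies verbatim.

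\emph{Step 2: the dimension-eight claim.} Sperança shows \(E^{11}/S^3_\star\) is a homotopy \(8\)-sphere not diffeomorphic to \(S^8\). In his argument this uses the reentrance/clutching description, our Lemma~\ref{lem:attaching}, together with an invariant computation. Since \(\Theta_8\cong\mathbb Z/2\) \cite{KM}, an exotic \(8\)-sphere is unique, so the quotient is the exotic \(8\)-sphere.

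\emph{Step 3: the dimension-ten claim.} By \cite{KM}, \(\Theta_{10}\cong\mathbb Z/6\), and \(bP_{11}=0\). The elements bounding spin manifolds form the subgroup of order three, the kernel of the \(\alpha\)-invariant. Sperança shows that \(E^{13}/S^3_\star\) is nontrivial and bounds a spin manifold. Any nontrivial element of that \(\mathbb Z/3\) generates it, which gives the claim.

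\emph{Main obstacle.} The delicate point is the dictionary in Step 1. The coordinate presentations \((\lambda,x,w)\) and \((p,w,x)\), and the choice of fixed axis, must match Sperança's exactly. Otherwise the quotient could differ, for instance by orientation. Orientation, however, is harmless: reversing it sends an element of order three to its inverse, and sends the exotic \(8\)-sphere to itself. So the conclusion is stable once the underlying manifolds are identified.
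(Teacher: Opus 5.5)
Your proposal is correct and takes the same approach as the paper: the paper gives no independent proof, but defines \(E^{11}\) and \(E^{13}\) to be Sperança's equivariant pullbacks of the Gromoll--Meyer bundle, with his \(\star\) action, and quotes Sperança's Theorem~1. One small point is that your ``main obstacle'' is not a real obstacle: the star quotient depends only on the \(\star\) action, not on the base coordinates or the choice of fixed axis (these enter only later, in the polar normal form and the bound on \(K_y\)), and your account of how Sperança detects exoticness is speculation that the citation does not need.
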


It remains to verify the geometric hypothesis of Theorem~\ref{thm:HLYgeneral}. For both representations this reduces to the same estimate for the infinitesimal action fields.

\subsection{The action-field bound}

For \(y\in S(V)\), let
\[
K_y\colon\operatorname{Im}\mathbb H\longrightarrow T_yS(V)
\]
be the infinitesimal action map
\[
K_y\xi
=
\left.\frac{d}{d\tau}\right|_{\tau=0}
\rho(e^{\tau\xi})y.
\]
The norms are taken with respect to \(Q\) on \(\operatorname{Im}\mathbb H\) and the unit round metric on \(S(V)\), as in \cite[\S1.3]{HLY}.

\begin{lemma}\label{lem:K}
For both \(\rho_8\) and \(\rho_{10}\),
\[
\|K_y\|_{\mathrm{op}}\le2
\]
for every \(y\in S(V)\).
\end{lemma}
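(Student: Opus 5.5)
The plan is to compute \(K_y\) explicitly from the formulas \eqref{eq:rho8} and \eqref{eq:rho10}, summand by summand, and bound each piece. In both cases \(V\) splits orthogonally into summands on which \(S^3\) acts by one of three model actions:
\begin{itemize}
\item the trivial action on \(\operatorname{Im}\mathbb H\), which is the \(p\)-component in dimension ten;
\item left multiplication \(x\mapsto qx\) on \(\mathbb H\);
\item conjugation \(w\mapsto qwq^{-1}\) on \(\mathbb H\) or on \(\operatorname{Im}\mathbb H\).
\end{itemize}
Differentiating at \(\tau=0\) with \(q=e^{\tau\xi}\), \(\xi\in\operatorname{Im}\mathbb H\), the corresponding infinitesimal fields are \(0\), \(\xi x\), and \(\xi w-w\xi=[\xi,w]\). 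Write \(y=(y_1,y_2,\dots)\) according to the splitting, with \(\sum|y_i|^2=1\). Then
\[
K_y\xi=(0,\ \xi y_{\mathrm{left}},\ [\xi,y_{\mathrm{conj}}]),
\]
and this vector is automatically tangent to \(S(V)\) because \(\rho\) is orthogonal. The round metric on \(T_yS(V)\) is the restriction of the Euclidean norm on \(V\), so it suffices to bound the Euclidean norm of \(K_y\xi\).

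The key pointwise estimates, for \(|\xi|=1\), are the following.
\begin{itemize}
\item For the left-multiplication summand, \(|\xi x|=|x|\), since the quaternion norm is multiplicative.
\item For a conjugation summand, write \(w=w_0+\vec w\) with real part \(w_0\) and imaginary part \(\vec w\). Then \([\xi,w]=[\xi,\vec w]=2\,\xi\times\vec w\), where \(\times\) is the vector cross product on \(\operatorname{Im}\mathbb H\cong\mathbb R^3\). Hence \(|[\xi,w]|\le2|\vec w|\le2|w|\).
\end{itemize}

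In dimension eight, with \(y=(x,w)\) and \(|x|^2+|w|^2=1\), this gives
\[
|K_y\xi|^2\le|x|^2+4|w|^2\le4\bigl(|x|^2+|w|^2\bigr)=4.
\]
In dimension ten, with \(y=(p,w,x)\) and \(|p|^2+|w|^2+|x|^2=1\), we get
\[
|K_y\xi|^2\le|w|^2+4|x|^2\le4.
\]
Taking the supremum over unit \(\xi\) yields \(\|K_y\|_{\mathrm{op}}\le2\) in both cases. Equality is attained at points with \(y\) purely imaginary in the conjugation summand and \(\xi\perp y\), so the bound is sharp. This is consistent with the estimate He--Liu--Yau obtained for their representation.

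There is no serious obstacle in this argument. The only point needing care is to get the infinitesimal conjugation field and the factor \(2\) right: \([\xi,\vec w]=2\,\xi\times\vec w\), not \(\xi\times\vec w\). A second check is that the metric on \(T_yS(V)\) used in \(\|K_y\|_{\mathrm{op}}\) is indeed the restriction of the Euclidean metric on \(V\), which holds because \(S(V)\) carries the unit round metric. We must also confirm that the coordinates in \eqref{eq:rho8} and \eqref{eq:rho10} are orthonormal for the Euclidean structure on \(W\), which is immediate from the presentations as unit spheres in sums of copies of \(\mathbb H\) and \(\operatorname{Im}\mathbb H\).
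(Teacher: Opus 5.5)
Your proposal is correct and is essentially the paper's proof. Both compute \(K_y\xi=(\xi x,[\xi,w])\) for \(\rho_8\) and \(K_y\xi=(0,\xi w,[\xi,x])\) for \(\rho_{10}\), use \(|\xi x|=|\xi||x|\) and \([\xi,w]=2\,\xi\times\operatorname{Im}w\), and conclude \(|K_y\xi|^2\le|\xi|^2(|x|^2+4|w|^2)\le4|\xi|^2\) (respectively with \(|w|^2+4|x|^2\)) from \(|y|=1\); your sharpness remark is a correct extra that the lemma does not need.
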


\begin{proof}
For \(\xi,v\in\mathbb H\) with \(\xi\in\operatorname{Im}\mathbb H\),
\[
\xi v-v\xi
=
[\xi,\operatorname{Im}v]
=
2\,\xi\times\operatorname{Im}v.
\]

In dimension eight, write
\[
y=(x,w)\in S(\mathbb H\oplus\mathbb H).
\]
Differentiating \eqref{eq:rho8} gives
\[
K_y\xi
=
(\xi x,[\xi,w])
=
(\xi x,2\xi\times\operatorname{Im}w).
\]
Consequently,
\[
\begin{aligned}
|K_y\xi|^2
&=
|\xi|^2|x|^2
+
4|\xi\times\operatorname{Im}w|^2\\
&\le
|\xi|^2\bigl(|x|^2+4|\operatorname{Im}w|^2\bigr)\\
&\le
4|\xi|^2,
\end{aligned}
\]
because
\[
|x|^2+|w|^2=1.
\]

In dimension ten, write
\[
y=(p,w,x)
\in
S(\operatorname{Im}\mathbb H\oplus\mathbb H\oplus\operatorname{Im}\mathbb H).
\]
Differentiating \eqref{eq:rho10} gives
\[
K_y\xi
=
(0,\xi w,[\xi,x])
=
(0,\xi w,2\xi\times x).
\]
Hence
\[
\begin{aligned}
|K_y\xi|^2
&=
|\xi|^2|w|^2
+
4|\xi\times x|^2\\
&\le
|\xi|^2\bigl(|w|^2+4|x|^2\bigr)\\
&\le
4|\xi|^2,
\end{aligned}
\]
since
\[
|p|^2+|w|^2+|x|^2=1.
\]
Thus \(\|K_y\|_{\mathrm{op}}\le2\) in both cases.
\end{proof}

The action-field estimate supplies the remaining hypothesis of Theorem~\ref{thm:HLYgeneral}. We can therefore combine the polar normal form, the geometric construction of Section~\ref{sec:HLY}, and Sperança's identification of the quotients to prove the main results.

\subsection{Proofs of the main results}

We can now prove Theorems~A and~B by combining the polar normal form with the action-field estimate.

\begin{proof}[Proof of Theorem~A]
The bundle
\[
E^{11}\longrightarrow S^8
\]
is a star bundle for the representation \(\rho_8\). By Proposition~\ref{prop:polar}, it is equivariantly isomorphic to a polar bundle \(P_\theta\), and Lemma~\ref{lem:K} gives
\[
\|K_y\|_{\mathrm{op}}\le2
\]
on its equator. Theorem~\ref{thm:HLYgeneral} therefore gives a metric of strictly positive sectional curvature on
\[
E^{11}/S^3_\star.
\]
By Theorem~\ref{thm:speranca}, this quotient is the exotic \(8\)-sphere.
\end{proof}

\begin{proof}[Proof of Theorem~B]
The same argument applies to the star bundle
\[
E^{13}\longrightarrow S^{10}
\]
with base representation \(\rho_{10}\). Proposition~\ref{prop:polar}, Lemma~\ref{lem:K}, and Theorem~\ref{thm:HLYgeneral} give a metric of strictly positive sectional curvature on
\[
E^{13}/S^3_\star.
\]
By Theorem~\ref{thm:speranca}, this quotient represents a generator of the order-three subgroup of \(\Theta_{10}\). Reversing orientation gives the other nontrivial element of that subgroup and leaves the Riemannian metric on the underlying smooth manifold unchanged.
\end{proof}

\begin{proof}[Proof of Corollary~C]
The standard \(8\)-sphere carries its round metric, while Theorem~A gives a metric of strictly positive sectional curvature on the exotic \(8\)-sphere. Hence every homotopy \(8\)-sphere admits a metric of strictly positive sectional curvature.

For dimension ten, every homotopy \(10\)-sphere is spin, since
\[
H^2(\Sigma^{10};\mathbb Z/2)=0
\]
and therefore \(w_2(\Sigma^{10})=0\). The spin structure is unique because
\[
H^1(\Sigma^{10};\mathbb Z/2)=0.
\]
The real Dirac index defines an additive homomorphism
\[
\alpha\colon\Theta_{10}\longrightarrow KO_{10}\cong\mathbb Z/2;
\]
see \cite[pp.~41--42]{Hi}. This homomorphism is nonzero in dimension ten \cite[p.~44]{Hi}. Since
\[
\Theta_{10}\cong\mathbb Z/6,
\]
its kernel is therefore the unique subgroup of order three.

If a homotopy \(10\)-sphere admits a metric of positive scalar curvature, then its \(\alpha\)-invariant vanishes by the spin Dirac index obstruction \cite{Hi}. Conversely, Theorem~B gives metrics of strictly positive sectional curvature on the two nontrivial elements of \(\ker\alpha\), while the trivial element is represented by the round sphere. Hence every element of \(\ker\alpha\) admits a metric of strictly positive sectional curvature. Since positive sectional curvature implies positive scalar curvature, a homotopy \(10\)-sphere admits a metric of positive sectional curvature if and only if it admits a metric of positive scalar curvature.
\end{proof}

\end{document}